\newtheorem {Proposition}{Proposition}[section]
\newtheorem {Lemma}[Proposition] {Lemma}
\newtheorem {Theorem}[Proposition]{Theorem}
\newtheorem {Corollary}[Proposition]{Corollary}
\newtheorem {Remark}[Proposition]{Remark}
\def\log{\mathop{\rm log}\nolimits}
\def\N{\mathbb{N}}
\def\R{\mathbb{R}}
\def\C{\mathbb{C}} 
\def\E{\mathbb{E}}
\def\P{\mathbb{P}}
\def\x{\mathbf{x}}
\def\y{\mathbf{y}}
\def\X{\mathbf{X}}
\def\Y{\mathbf{Y}}
\def\E{\mathbb{E}}
\def\mP{\mathrm{P}}
\def\mQ{\mathrm{Q}}
\def\Cs{\mathcal{C}^{s}(\Omega)}
\def\Cso{\mathcal{C}^{s}_0(\Omega)}
\def\Co{\mathcal{C}^{s}(\Omega)\times \mathcal{C}^s_0(\Omega)}
\def\CoAlpha{\mathcal{C}^{\alpha}(\Omega)\times \mathcal{C}^{\alpha}_0(\Omega)}
\def\AP{\mathcal{A}_{\mP}}
\def\AQ{\mathcal{A}_{\mQ}}
\def\bAP{\bar{\mathcal{A}}_{\mP}}
\def\bAQ{\bar{\mathcal{A}}_{\mQ}}
\newcommand{\ER}{\color{black}}
\begin{document} 
\begin{frontmatter}
\title{Weak limits of entropy regularized Optimal Transport: potentials, plans and divergences}
\runtitle{Weak limits of entropy regularized optimal transport}
\runauthor{Gonz\'alez-Sanz, Loubes and Niles-Weed}

\begin{aug}
\author[A]{\fnms{Alberto}~\snm{González-Sanz}\ead[label=e1]{ag4855@columbia.edu}},
\author[M]{\fnms{Jean-Michel}~\snm{Loubes}\ead[label=e2]{loubes@math.univ-toulouse.fr}}
\and
\author[B]{\fnms{Jonathan}~\snm{Niles-Weed}\ead[label=e3]{jnw@cims.nyu.edu}}
\address[A]{Department of Statistics,
Columbia University, New York, USA, \printead{e1}}

\address[M]{
IMT, Université de Toulouse, France.  \printead{e2}}

\address[B]{ Courant Institute and Center for Data Sciences, New York University, New York, USA, \printead{e3}}

\end{aug}

\begin{abstract}
This work deals with the asymptotic distribution of both potentials and couplings of entropic regularized optimal transport for compactly supported probabilities in $\R^d$. We first provide  the central limit theorem of the Sinkhorn potentials---the solutions of the dual problem---as a Gaussian process in $\Cs$. Then we obtain  the weak limits of the couplings---the solutions of the primal problem---evaluated on integrable functions, proving a conjecture of  Harchaoui, Liu, and Pal (2020).  
Finally we consider  the  weak limit of the entropic Sinkhorn divergence  between $P$ and $Q$ under both assumptions $H_0:\ {\rm P}={\rm Q}$ or $H_1:\ {\rm P}\neq{\rm Q}$. Under $H_0$ the limit is a quadratic form applied to a Gaussian process in a Sobolev space, while under $H_1$, the limit is Gaussian. We provide also a different characterisation of the limit under $H_0$ in terms of an infinite sum of an i.i.d.\ sequence of standard Gaussian random variables.  Such results enable statistical inference based on entropic regularized optimal transport.
\end{abstract}

\begin{keyword}[class=MSC]
\kwd[Primary ]{35J96}
\kwd{60F15}
\end{keyword}

\begin{keyword}
\kwd{Central Limit Theorem}
\kwd{Entropic regularization}
\kwd{Optimal plans}
\kwd{Optimal potentials}
\kwd{Sinkhorn divergence}
\end{keyword}

\end{frontmatter}

\maketitle
\section{Introduction}
Optimal transport has proven its effectiveness as a powerful tool in statistical data analysis. Formulated as a minimization problem, it reads
$$
	\mathcal{T}_2(\mP,\mQ)=\min_{\pi\in \Pi(\mP,\mQ)} \int_{\R^d\times \R^d} {\textstyle \frac{1}{2}}\|\x-\y\|^2 d\pi(\x,\y),
$$
where $\Pi(\mP, \mQ)$ denotes the set of couplings between the probabilities $\mP$ and $\mQ$. Optimal transport provides a notion of discrepancy between distributions \cite[Chapter~7]{villani2003topics} useful for testing similarity between probabilities  \cite{BarGinMat99,BarGinUtz05,gonzalezdelgado2021twosample,del2019central}  and making inference. For this,  it is necessary to know the weak limit of $	\mathcal{T}_2(\mP_n,\mQ_m)$ when the empirical measures $\mP_n$ and $\mQ_m$ are used in the place of the population ones, $\mP$ and $\mQ$. Unfortunately, in general dimension---with the exception of perhaps a few simplified cases  \cite{delbarrio2021centraldisc,KlaTamMun20}---the limit is unknown.  Moreover, the rate is  slower than the usual parametric rate ${n}^{-\frac{1}{2}}$ \cite{weedBach,Fournier2013OnTR}. This  motivates the use of regularization methods for optimal transport, since they are not affected by the curse of  dimension, such as entropic regularization  \cite{Cut13}, or simplified versions,  such as sliced optimal transport \cite{RabinSliced}. Regularized optimal transport is now used for many practical applications  such as domain adaptation \cite{Courty2017OptimalTF}, counterfactual explanations  \cite{delara2021transportbased}, music transcription  \cite{MusicFlam},  diffeomorphic registration \cite{DiffeosOT,de2022diffeomorphic} and measure colocalization in super-resolution images  \cite{KlaTamMun20}. 

\indent Thought weak limits are known for the optimal value in the regularized optimal transport problem, less is known about the distributional limits of the  optimizers themselves. In this paper we provide the limits of the empirical solutions of the primal problem (plans/couplings), the dual problem (potentials), and the celebrated  Sinkhorn divergence \cite{GenPeyCut18}. 

Let $\Omega\subset\R^d$ be a compact set. The entropic regularized optimal transport cost between two probability measures $\mP,{\rm Q}\in \mathcal{P}(\Omega)$ is defined as the solution to the optimization problem 
\begin{equation}\label{kanto_entrop}
S_{\epsilon}(\mP,\mQ)=\min_{\pi\in \Pi(\mP,\mQ)} \int_{\R^d\times \R^d} {\textstyle \frac{1}{2}}\|\x-\y\|^2d\pi(\x,\y)+\epsilon H(\pi \mid \mP \otimes \mQ),
\end{equation}
where the relative entropy between two probability measures $\alpha$ and $\beta$ is written as $H(\alpha \mid \beta) = \int \log(\frac{d\alpha}{d\beta}(x))d\alpha(x)$ if $\alpha$ is absolutely continuous with respect to $\beta$, $\alpha\ll \beta$, and $+\infty$ otherwise. We denote the solution of \eqref{kanto_entrop} by $\pi_{\mP,\mQ}$.
This problem can also be written in its dual formulation 
\begin{equation}\label{dual_entrop}
  S_{\epsilon}(\mP,{\rm Q})= \sup_{\substack{f\in L_1(\mP) \\ g\in L_1({\rm Q})}} \int f(\x) d \mP(\x) + \int g(\y) d \mQ(\y) -\epsilon \int \int  e^{\frac{f(\x)+ g(\y)- \frac{1}{2}\|\x-\y\|^2}{\epsilon}} d \mP(\x) d \mQ(\y)  +\epsilon.
\end{equation}
There exists a unique pair $(f_{\mP,\mQ},g_{\mP,\mQ})$ of solutions of the above optimization problem such that 
\begin{equation}
    \label{eq:expected0}
    \int  g_{\mP,\mQ}(\x) d\mQ(\y)=0.  
\end{equation}
Moreover, it holds that $\pi_{\mP,\mQ}=\xi_{\mP,\mQ} d (\mP \otimes \mQ)$ with
\begin{align}\label{optimallityCodt01}
   \xi_{\mP,\mQ}(\x,\y) =e^{\frac{f_{\mP,\mQ}(\x)+g_{\mP,\mQ}(\y)- \frac{1}{2}\|\x-\y\|^2}{\epsilon}}. 
   \end{align}
 It can be shown that this optimality condition implies the  unique extension of the solutions of \eqref{optimallityCodt01} to the space $\mathcal{C}^{\alpha}(\Omega)\times \mathcal{C}^{\alpha}(\Omega)$, given by the relations 
\begin{align}
\begin{split}
    \label{optimallityCodt0}
       f_{\mP,\mQ}&=-\epsilon\log\left(\int e^{\frac{g_{\mP,\mQ}(\y)-\frac{1}{2}\|\cdot-\y\|^2}{\epsilon}}d\mQ(\y)\right)\quad {\rm and}\\
    g_{\mP,\mQ}&=-\epsilon\log\left(\int e^{\frac{f_{\mP,\mQ}(\x)-\frac{1}{2}\|\x-\cdot\|^2}{\epsilon}}d\mP(\x)\right).
\end{split}
\end{align}
These relations have an important consequence---the optimization class can be reduced from   $L_1({\rm Q})$ to $\mathcal{C}^{s}(\Omega)$, with, moreover, uniformly (for all $\mP,{\rm Q}\in \mathcal{P}(\Omega)$) bounded derivatives, see \cite{genevay2019,Weed19}. Since the class $\mathcal{C}^{s}(\Omega)$, with an appropriate choice of $ s$, is uniformly Donsker (eg. Section~2.7.1. in \cite{Vart_Well}), one can obtain the bound
$ \sqrt{n}\, \E\left[\mid S_{\epsilon}({\rm P}_n,{\rm Q})- S_{\epsilon}(\mP,{\rm Q})\mid\right] \leq C_\Omega,$
where the constant $C_\Omega$ depends polynomially on $\operatorname{diam}(\Omega)$, see \cite{Weed19}. Moreover, since the convergence of  $\E[S_{\epsilon}({\rm P}_n,{\rm Q})] $ towards its population counterpart occurs at a faster rate  (see, e.g., \cite{del2022improved}) and the fluctuations are asymptotically Gaussian  (\cite{delbarrio2019,Weed19,delbarrio2021central}), the weak limit
\begin{equation}\label{previousPaper}
    \sqrt{n}(S_{\epsilon}({\rm P}_n,{\rm Q})-S_{\epsilon}(\mP,{\rm Q}))\xrightarrow{w} N(0,\operatorname{Var}_{\X\sim 
    \mP}(f_{\mP,\mQ} (\X)))
\end{equation}
holds. Though \eqref{previousPaper} gives a weak limit for $S_{\epsilon}({\rm P}_n,{\rm Q}) $,  it  does not give information about the coupling $\pi_{\mP,\mQ}$ or the optimal dual variables $(f_{\mP,\mQ},g_{\mP,\mQ})$.
\vskip .1in

For statistical inference, obtaining the asymptotic behaviour of optimal regularized couplings  themselves, i.e., the limits of 
\begin{equation}
    \label{eq:etalim}
  \sqrt{\frac{n\, m}{n+m}}\int \eta\, (d\pi_{\mP_n,\mQ_m} -d\pi_{\mP,\mQ} ), \quad \text{where $\eta\in L^\infty(\mP\otimes\mQ)$,}
\end{equation}
is highly desirable. Indeed, in many of the previously cited applications, the optimal coupling itself is the object of interest.
Obtaining the limit of~\eqref{eq:etalim} would allow the statistician to obtain consistent confidence intervals for inference on the coupling.
    With regards to the limit of \eqref{eq:etalim}, the recent work \cite{ChaosDecom}  studied a modified regularization procedure inspired by Schrodinger’s lazy gas experiment giving rise to a different empirical estimator $\overline{\pi}_{\mP_n,\mQ_m} $. That work showed that if $\frac{m}{n + m} \to \lambda \in (0, 1)$, then $\overline{\pi}_{\mP_n,\mQ_m} $ satisfies  \begin{equation}\label{limitetaIntro}
        \sqrt{\frac{n\, m}{n+m}}\int \eta  (d \overline{\pi}_{\mP_n,\mQ_m} -d \pi_{\mP,\mQ})  \overset{w}{\longrightarrow} N(0,\sigma^2_{\lambda, \epsilon}(\eta)), \quad \eta\in L^2(\mP\otimes\mQ),
    \end{equation}
    where the variance $\sigma^2_{\lambda, \epsilon}(\eta)$ is 
    \begin{multline*}
        \lambda \operatorname{Var}_{\X\sim \mP}\left( (1-\AQ\AP)^{-1}\big(\eta_{\x} -\AQ\eta_{\y} \big)(\X)\right)\\+ (1-\lambda )\operatorname{Var}_{\Y\sim \mQ}\left( (1-\AP\AQ)^{-1}\big(\eta_{\y} -\AP\eta_{\x} \big)(\Y)\right),
    \end{multline*}
see  section~\ref{sec:tclPot} for the precise definitions of the operators $\AP,\AQ$ and  section~\ref{sec:TCLcouplings} for the ones of $\eta_{\x} ,\eta_{\y} $. Moreover,  the authors of \cite{ChaosDecom} conjectured that the distributional limit \eqref{limitetaIntro} holds also for the classic Sinkhorn regularization.
 \cite{Gunsilius2021MatchingFC} proved that \eqref{eq:etalim} is tight and the limit is centered, however the conjecture remained opened. In Theorem~\ref{Theorem:TCLcouplings} we prove that \eqref{limitetaIntro} holds for compactly supported measures, and therefore the conjecture of  \cite{ChaosDecom} is true. 

Theorem~\ref{Theorem:TCLcouplings} is derived as a consequence of the first-order linearization of the potentials, described in Theorem~\ref{Theorem:TCLpot}, whose proof is based on a reformulation of the optimality conditions \eqref{optimallityCodt0} as a  $Z$-estimation problem (see  for instance \cite{Vart_Well}). Differentiating in the Fr\'echet sense the objective function and using the uniform bounds provided by \cite{del2022improved}, the problem is reduced to the continuity and existence of the following operator in $\Cso\times\Cs$
\begin{equation*}
    \left(\begin{array}{cc}
  (1-\AQ\bAP)^{-1} & -(1-\AQ\bAP)^{-1}\AQ  \\
      - \bAP (1-\AQ\bAP)^{-1} & (1-\bAP\AQ)^{-1} 
   \end{array}\right),
\end{equation*}
which follows from Fredholm alternative \cite[Theorem 6.6]{Brezis} (note  that a similar result of invertibility of these operators between different Banach spaces has been obtained by \cite{CarlierFredho}). As a consequence, Theorem~\ref{Theorem:TCLpot} yields the limits, if $m=m(n)\to \infty$ and $\frac{m}{n+m}\to \lambda\in (0,1)$,
\begin{multline*}
     { \sqrt{\frac{n\, m}{n+m}}} {\textstyle\small \left(\begin{array}{c}
     f_{\mP_n,\mQ_m} -f_{\mP,\mQ}   \\
        g_{\mP_n,\mQ_m} -g_{\mP,\mQ} 
   \end{array}\right)}\\
   \xrightarrow{w} \epsilon {\textstyle\small\left(\begin{array}{c}
        {\textstyle\tiny      {\textstyle\tiny \sqrt{\lambda}}(1-\AQ\AP)^{-1}\AQ {\bf i}_{\xi_{\mP, \mQ}} \mathbb{G}_{\mP}-\sqrt{1-\lambda}}(1-\AQ\AP)^{-1} {\bf i}_{\xi_{\mQ, \mP}}\mathbb{G}_{\mQ}\\
          {\textstyle\tiny \sqrt{1-\lambda}}\AP (1-\AQ\AP)^{-1} {\bf i}_{\xi_{\mQ, \mP}}\mathbb{G}_{\mQ} -{\textstyle\footnotesize\sqrt{\lambda}}(1-\AP\AQ)^{-1} {\bf i}_{\xi_{\mP, \mQ}}\mathbb{G}_{\mP}  \end{array}\right)}, 
\end{multline*}
weakly in $ \Co$, where
$\mathbb{G}_{\mP}$ and $ \mathbb{G}_{\mQ}$ are independent $\mP$ and $\mQ$-Brownian bridges. 
 Moreover, in the one-sample case; 
\begin{equation*}
    \sqrt{n}\left(\begin{array}{c}
     f_{\mP_n,\mQ} -f_{\mP,\mQ}  \\
        g_{\mP_n,\mQ} -g_{\mP,\mQ} 
   \end{array}\right)\to -\left(\begin{array}{c}
         (1-\AQ\AP)^{-1}\AQ \mathbb{G}_{\mP,s} \\
        - (1-\AP\AQ)^{-1} \mathbb{G}_{\mP,s}  \end{array}\right),
\end{equation*}
weakly in $ \Co$. 
Theorem~\ref{Theorem:TCLpot},  apart from being interesting in itself, has many applications since, among other things, the derivative of $f_{\mP,\mQ} $ may be used to estimate the transport map from $\mP$ to $\mQ$  (eg. \cite{Pooladian2021EntropicEO,Rigollet2022OnTS}), which is also a useful tool for inference.
\\

The regularized transport cost is easier to compute than the usual optimal transport cost but is unsuitable for two-sample testing, since $S_{\epsilon}({\rm P},{\rm P})\neq 0 $. In~ \cite{GenPeyCut18}, the authors proposes to remedy this deficiency by defining the quadratic Sinkhorn divergence:
$$D_{\epsilon}(\mP,{\rm Q})= S_{\epsilon}(\mP,{\rm Q})-{\textstyle \frac{1}{2}}\left(  S_{\epsilon}(\mP,\mP)+ S_{\epsilon}({\rm Q},{\rm Q}) \right).$$
This definition satisfies several atractive properties: it is symmetric in $\mP,{\rm Q}$ and  
$D_{\epsilon}(\mP,{\rm Q})\geq 0$, with $D_{\epsilon}(\mP,{\rm Q})= 0$ if and only if $\mP={\rm Q}$   \cite[Theorem 1]{Feydy2019InterpolatingBO}. This quantity is therefore  an effective way to measure discrepancies between distributions. 

The weak limit of the empirical Sinkhorn's divergence, described in Theorem~\ref{Theorem:TCLDiv}, has different rates depending on the hypotheses $H_0:\ {\rm P}={\rm Q}$ or $H_1:\ {\rm P}\neq{\rm Q}$. Under $H_1$ the limit can be derived by means of  Efron-Stein linearization \cite{delbarrio2019,Weed19,delbarrio2021central,gonzalezdelgado2021twosample}, giving  
\begin{equation*}
    \sqrt{\frac{n\,m}{n+m}}(D_{\epsilon}(\mP_n,\mQ_m)- D_{\epsilon}(\mP,\mQ))\xrightarrow{w} N(0,\lambda\operatorname{Var}_{\mP}(\psi _{\mP,\mQ})+(1-\lambda)\operatorname{Var}_{\mQ}(\psi _{\mQ,\mP})),
\end{equation*}
and 
\begin{equation*}
    \sqrt{n}(D_{\epsilon}(\mP_n,\mQ)- D_{\epsilon}(\mP,\mQ))\xrightarrow{w} N(0,\operatorname{Var}_{\mP}(\psi _{\mP,\mQ})),
\end{equation*}
where $\psi_{\mP,\mQ} =f_{\mP,\mQ} - \frac{1}{2}(f_{\mP,\mP} +g_{\mP,\mP} ) $ and $\psi_{\mQ,\mP} =f_{\mQ,\mP} - \frac{1}{2}(f_{\mQ,\mQ} +g_{\mQ,\mQ} ) $.
Under  $H_0$, however, $\psi_{\mP,\mQ} =\psi_{\mQ,\mP} =0$, so the limit is degenerate, and in fact $D_{\epsilon}(\mP_n,\mQ)=\mathcal{O}_{\P}(\frac{1}{n})$. To obtain a non-trivial limit, we therefore conduct a second order analysis.  The limit, in this case, behaves as 
         $n\,D_1({\rm P}_n,{\rm P})\approx  \frac{1}{4}\sum_{j=1}^{\infty}\lambda_i^2 N_j^2$
where $ \{N_i\}_{i\in \N}$ is a sequence of i.i.d.\ random variables with $N_i\sim  N(0,1)$ and   $\{\lambda_{j}\}_{j\in \N}\subset [0, \infty)$ is a square-summable  sequence depending on $\mP$ and $\epsilon$.   A similar expression was previously known to hold only in the case where $P$ is discrete~\cite{BigotCLT}. Extending those results to the general case is not a trivial process (cf. \cite[Remark~8]{Goldfeld2022LimitTF}), and verifying the necessary conditions to obtain this limit is the most technical part of our argument.

\subsection{State-of-art}
In recent years there has been a substantial body of work studying the weak limits of the optimal transport problem. Since, for obvious reasons we cannot cite every one of them, we refer to \cite{HunKlaSta22} for a comprehensive survey. Focusing on entropic regularized optimal transport, similar results for finitely supported measures are obtained by \cite{KlaTamMun20} and \cite{BigotCLT}. The limits of the regularized cost \eqref{previousPaper} has been proven first in \cite{del2022improved} and \cite{Weed19}---using the Efron-Stein linearization--and then in \cite{Goldfeld2022StatisticalIW}---using Hadamard linearization. 

The conjecture of \cite{ChaosDecom} has been also investigated in~\cite{Gunsilius2021MatchingFC}, under slightly different assumptions. That work shows the existence of a weak limit for \eqref{eq:etalim}, which they conjecture is Gaussian. We prove their conjecture. They also derive a similar result to Theorem~\ref{Theorem:TCLpot}, but in a weaker norm. We prove convergence in the space $\Co$, which allows us to derive the limit of the Sinkhorn divergence.

Concurrently and independently of our work, \cite{Goldfeld2022LimitTF}  derive also the limits of the Sinkhorn optimal transport potentials  potentials and divergences. In that work the first order development is done by means of the functional Hadamard differentiability of the potentials. It is worth mentioning that  since a preprint of this work first appeared online, \cite{gonzalezsanzShayan2023weak} has generalized our results for non-smooth bounded costs. To obtain this generalization \cite{gonzalezsanzShayan2023weak} linearizes the potentials in the space $L^2({\rm P}_n)$ instead of in $C^{\alpha}(\Omega)$. This allows them to avoid  suprema  over  function  classes, which, for non-smooth costs, are not Donkser. 
\subsection{Outline of the paper}
The paper is organized as follows. The notation is given in Section~\ref{notation}.  The central limit theorem  for the regularized optimal transport potentials and its proof can be found in Section~\ref{sec:tclPot}; however, auxiliary Lemmas are proven in Section~\ref{sec:proofLemmas}. In Section~\ref{sec:TCLcouplings} we enunciate and prove the central limit theorem for the couplings, Theorem~\ref{Theorem:TCLcouplings}, and its immediate consequence, Corollary~\ref{Corollary:TCLcouplings}. Section~\ref{sec:divergences} deals with the weak limits of the Sinkhorn divergence, formally stated in Theorems~\ref{Theorem:TCLDivG} and \ref{Theorem:TCLDiv} . Also in this section the reader can find a discussion about the simplification of the limit by embedding it into a Hilbert space and the proof of the main result.  Auxiliary results and their proofs  are postponed to Section~\ref{auxiliary}.
\subsection{Notations}\label{notation}
For the reader's convenience, this section sets the notation used throughout this work. Unless otherwise specified, the probabilities ${\rm P}$ and ${\rm Q}$  are supported in the compact set  $\Omega\subset \R^d$, meaning that ${\rm P},{\rm Q}\in \mathcal{P}(\Omega)$. In the proofs we will use the notations 
\begin{align}
     \begin{split}\label{notationForProofs}
         &  f_{n,m}(\x)= \frac{f_{\mP_n,\mQ_m}(\x)}{\epsilon}, \quad g_{n,m}(\y)= \frac{ g_{\mP_n,\mQ_m}(\y)}{\epsilon},\quad \mathbb{C}(\x, \y)=e^{-\frac{1}{2\epsilon}\|\x-\y\|^2}, \\
         &  \quad f_*(\x)= \frac{f_{\mP,\mQ}(\x)}{\epsilon}, \quad g_{*}(\y)= \frac{f_{\mP,\mQ}(\y)}{\epsilon},
     \end{split}
 \end{align}
and 
$$ h_{n,m}(\x, \y)= f_{n,m}(\x)+g_{n,m}(\y),\quad h_{*}(\x, \y)= f_*(\x)+g_*(\y)$$
to reduce the size of the displays. 
For two probabilities $\mu,\nu\in \mathcal{P}(\Omega)$ the pair $(f_{\mu,\nu} ,g_{\mu,\nu} )$ is one solving \eqref{dual_entrop} for $(\mu,\nu)$, its direct sum ($(\x,\y)\mapsto f_{\mu,\nu} (\x)+ g_{\mu,\nu} (\y)$) is denoted by $h_{\mu,\nu} $. The solution of \eqref{kanto_entrop} is denoted by $\pi_{\mu,\nu} $ and its density with respect to  the direct product measure $\mu \otimes \nu$ by $\xi_{\mu,\nu} $. 

We set $s=\left\lceil \frac{d}{2}\right\rceil+1$, and denote, for any $\alpha\in \N$,  the space of  all functions on $\Omega$ that possess uniformly bounded partial derivatives up to order $\alpha$ as $\mathcal{C}^{\alpha}(\Omega)$, in which we consider the  norm
$$\|f \|_{\mathcal{C}^{\alpha}(\Omega)}= \|f \|_{\alpha}=\sum_{i=0}^{\alpha}\sum_{  \mid\beta  \mid= i}\|D^{\beta} f \|_{\infty}.$$
In the product space  $\mathcal{C}^{\alpha}(\Omega)\times \mathcal{C}^{\alpha}(\Omega)$  we consider the norm 
$$\|(f,g) \|_{\mathcal{C}^{\alpha}(\Omega)\times \mathcal{C}^{\alpha}(\Omega)}= \|(f,g) \|_{\alpha\times \alpha}= \|f \|_{\alpha}+ \|g\|_{\alpha}.$$
We define the spaces
$$ \mathcal{C}^{\alpha}_0(\Omega)=\left\{ f\in \mathcal{C}^{\alpha}(\Omega):\quad \mQ(f)=0 \right\} \ {\rm and}\  L^2_0(\mQ)=\left\{ f\in L^2(\mQ):\quad \int f d \mQ=0 \right\}. $$
Moreover, for a general Banach space $\mathcal{H}$, the norm is denoted as $\|\cdot\|_{\mathcal{H}}$. The operator norm of an operator $F: \mathcal{H}_1\to \mathcal{H}_2 $ is denoted as 
$ \|F \|_{\mathcal{H}_1\to \mathcal{H}_2}$. Unless otherwise stated, the random vectors $\X$ and $\Y$ are independent and follow  respectively the distributions $\rm P$ and $\rm Q$.
For a measurable function $f:\Omega\rightarrow\R$, the following expressions are equivalent:
\begin{equation*}
    \E[f(\X)]=\E_{\X\sim {\rm P}}[f(\X)]=\int f(\x)d {\rm P}(\x)=\int fd {\rm P}={\rm P}(f).
\end{equation*}
The function 
$\y\mapsto \int f(\x,\y)d {\rm P}(\x) $ is denoted by 
$\int f(\x,\cdot)d {\rm P}(\x)$.
Given a random sequence $\{w_n\}_{n\in}\subset \mathcal{H}$  and a real random sequence $\{a_n\}_{n\in\N}$,  the notation  $w_n=o_{\P}(a_n)$ means that the sequence $w_n/a_n$ tends to $0$ in probability, and  $w_n=\mathcal{O}_{\P}(a_n)$ that $w_n/a_n$ is stochastically bounded, for further details see Section~2.2 in \cite{vaart_1998}. Finally, for a Borel measure  $\mu$ in $\R^d$,  $L^2(\mu)$ denotes the space of square integrable functions. The spacial case of the Lebesgue measure $\ell_d$ in $\Omega$ is denoted by $L^2(\Omega)$. Probability measures are sometimes viewed as elements of the dual space of $\mathcal{C}^{\alpha}(\Omega)$, namely $(\mathcal{C}^{\alpha}(\Omega))'$,  with the standard dual norm 
$ \|P\|_\alpha'=\sup_{\|f\|_\alpha\leq 1}|P(f)|.$
\section{Central Limit Theorem of Sinkhorn potentials}\label{sec:tclPot}
The main result of this section is Theorem~\ref{Theorem:TCLpot}, which gives the first-order linearization of the difference between the empirical and population Sinkhorn potentials. As a consequence,  Corollary~\ref{Corollary:TCLpot}  gives the central limit theorem of that difference with rate $\sqrt{\frac{n\,m}{m+n}}$.   This section contains also the proof of Theorem~\ref{Theorem:TCLpot} and Corollary~\ref{Corollary:TCLpot}. The proofs of auxiliary results are postponed to the Appendix. 

Set two probabilities $\mP,\mQ\in \mathcal{P}(\Omega)$, recall that $\xi_{\mP,\mQ} $ is the density of $\pi_{\mP,\mQ} $ w.r.t to  $\mP \otimes\mQ$ and define  the operators
\begin{align}\label{operatorsA}
    \begin{split}
    \mathcal{A}_{\mP} :L^2({\rm P})\ni f&\mapsto \int \xi_{\mP,\mQ} (\x,\cdot)f(\x)d\mP(\x)\in \mathcal{C}^{\alpha}(\Omega) , \\ \AQ:L^2_0({\rm Q})\ni g&\mapsto \int \xi_{\mP,\mQ} (\cdot,\y)g(\y)d\mQ(\y)\in \mathcal{C}^{\alpha}(\Omega),\\
    \bAP:L^2_0({\rm P})\ni f&\mapsto\AP f-\int \AP f(\y)d\mQ(\y)\in \mathcal{C}^{\alpha}(\Omega)
    \end{split}
\end{align}
and,  for a function $h\in \mathcal{C}^{\infty}(\Omega^2)$, 
\begin{align*}
    {\bf i}_{h}: (\mathcal{C}^{\alpha}(\Omega))'&\to \mathcal{C}^{\infty}(\Omega)\\
   \nu &\mapsto  \left(  \y \mapsto \nu(h(\cdot, \y))\right),
\end{align*}
which for Radon measures $\nu\in (\mathcal{C}(\Omega))'$ takes the form 
$  {\bf i}_{h}(\nu)=\int h(\x, \cdot) d\nu(\x). $
Note that by convention, ${\bf i}_h$ always corresponds to the application of $\nu$ to the first coordinate of $h$.
At times during our argument, we will wish to apply this operation to either the first or second coordinate of $\xi_{\mP, \mQ}$ (or to its empirical counterpart), which we will indicate by swapping the order of the subscripts, e.g.,
\begin{align*}
	{\bf i}_{\xi_{\mQ, \mP}}(\nu) := \left(  \x \mapsto \nu(\xi_{\mP, \mQ}(\x, \cdot))\right)\,.
\end{align*}

The following result shows this operator is bounded. For a multiindex $b=(b_1, \dots, b_d)\in \N^d$ We use the notation
$ \partial_{b, \y } h= \partial_{(b_1, \dots, b_d, 0, \dots, 0) } h $. 
\begin{Lemma}\label{Lemma:ixi}
For all $b\in \N^{d}$ and $\nu\in \mathcal{C}^{\alpha}(\Omega)$, it holds that      $ \partial_b {\bf i}_{h}(\nu)={\bf i}_{\partial_{b, \y } h} (\nu) $, where $ \partial_{b, \y } h(\y, \x) $ denotes the partial derivative (in the standard multi-index notation) with respect to the $\y$'s component. Moreover, 
    for all $\beta\in \N$  there exists a constant $ C(\beta)  $ such that
$$\|{\bf i}_{h} (\nu)\|_{\mathcal{C}^{\beta}(\Omega)}\leq C(\beta)\|\nu\|_{(\mathcal{C}^\alpha(\Omega))'}, $$
so that ${\bf i}_{h}: (\mathcal{C}^{\alpha}(\Omega))'\to \mathcal{C}^{\beta}(\Omega)  $ is a well-defined bounded operator for any $\beta\in \N$.  
\end{Lemma}

Remark first that, given the optimal entropic coupling $\pi_{\mP,\mQ} $, the operators $\AP$ and $\AQ$ correspond to  the  barycentric projection  of the a generic function $f$ in $L^2({\rm P})$.  Our definition if therefore a natural generalization of the one proposed in \cite{Pooladian2021EntropicEO}, which  studies the projection  $\int \xi_{\mP,\mQ} (\cdot,\y) \y d\mQ(\y)$. $\bAP$ is nothing more than the  centered  version of the operator $\AP$. Note that in our work we only need to consider this operator but we could have defined in the same way $\bAQ $.
 In the following result such operators appear naturally when describing  the limit of the Sinkhorn potentials.
      \begin{Theorem}\label{Theorem:TCLpot}
      Let $\Omega\subset{\R^d}$ be a compact set, ${\rm P}, {\rm Q}\in \mathcal{P}(\Omega)$ and ${\rm P}_n$ (resp. $\mQ_m$) be the empirical measure of the i.i.d.\ sample $\X_1,\dots,\X_n$ (resp. $\Y_1,\dots,\Y_m$) distributed as $\rm P$ (resp. $\mQ$). If $m=m(n)\to \infty$ and $\frac{m}{n+m}\to \lambda\in (0,1)$,
\begin{multline*}
     \left(\begin{array}{c}
        f_{\mP_n,\mQ_m} - f_{\mP,\mQ}   \\
        g_{\mP_n,\mQ_m} - g_{\mP,\mQ}   
    \end{array}\right)
    =  \epsilon\, \left(\begin{array}{c}
       (1-\AQ\AP)^{-1}\AQ {\bf i}_{\xi_{\mP, \mQ}}(\mP_n-\mP)-(1-\AQ\AP)^{-1} {\bf i}_{\xi_{\mQ, \mP}}(\mQ_m-\mQ)     \\
         \AP (1-\AQ\AP)^{-1} {\bf i}_{\xi_{\mQ, \mP}}(\mQ_m-\mQ)  -(1-\AP\AQ)^{-1} {\bf i}_{\xi_{\mP, \mQ}}(\mP_n-\mP)\end{array}\right)\\+o_{\P}\left(\sqrt{\frac{n+ m }{n\,m}}\right)
\end{multline*}
 in $ \mathcal{C}^{\alpha}(\Omega)\times\mathcal{C}^{\alpha}(\Omega)$, for any $\alpha\in \N$. 
      \end{Theorem}
\begin{Remark}
   Recall that our notational convention implies that ${\bf i}_{\xi_{\mP, \mQ}}(\mP_n-\mP)$ and ${\bf i}_{\xi_{\mQ, \mP}}(\mQ_m-\mQ) $ implicitly refer to integration of $\xi_{\mP, \mQ}$ with respect to different coordinates.
\end{Remark}
By proving that
\begin{equation}
    \label{limitGaussian}
    \left(\begin{array}{c}
{\bf i}_{\xi_{\mP, \mQ}}  (\mP_n-\mP)\\ {\bf i}_{\xi_{\mQ, \mP} } (\mQ_m-\mQ)
\end{array}\right)=\left(\begin{array}{c}
   \frac{1}{n} \sum_{k=1}^n \xi_{\mP,\mQ} (\X_k,\cdot)-\E\left[ \xi_{\mP,\mQ} (\X, \cdot)\right]\\
  \frac{1}{m} \sum_{k=1}^m \xi_{\mP,\mQ} (\cdot, \Y_k)-\E\left[ \xi_{\mP,\mQ} (\cdot, \Y)\right]\\
\end{array}\right) 
\end{equation}
satisfies the central limit theorem in $\mathcal{C}^{\alpha}(\Omega)\times \mathcal{C}^{\alpha}(\Omega)$ and the linearization given in Theorem~\ref{Theorem:TCLpot} we obtain the limit behavior.  There are several ways to prove \eqref{limitGaussian}: the first involves utilizing the fact that the empirical process satisfies the central limit theorem in $(\mathcal{C}^{\alpha}(\Omega))'$ ($\alpha\geq s$), followed by the application of Lemma~\ref{Lemma:BoundOnA}. The second method involves using embeddings of Sobolev spaces into $(\mathcal{C}^{\alpha}(\Omega))'$ that are of type II and thus satisfy the central limit theorem. The last approach is to demonstrate that $\|{\bf i}_{\xi_{\mP, \mQ}}  (\mP_n-\mP)\|_{\mathcal{C}^{\alpha+1}(\Omega)}=\mathcal{O}_\P(1/n)$ thereby establishing that 
${\bf i}_{\xi_{\mP, \mQ}}  (\mP_n-\mP)$ is tight in $\mathcal{C}^{\alpha}(\Omega)$.   
We  leave the details for the reader.
   \begin{Corollary}\label{Corollary:TCLpot}
Let $\Omega\subset{\R^d}$ be a compact set, ${\rm P}, {\rm Q}\in \mathcal{P}(\Omega)$ and ${\rm P}_n$ (resp. $\mQ_m$) be the empirical measure of the i.i.d. sample $\X_1,\dots,\X_n$ (resp. $\Y_1,\dots,\Y_m$) distributed as $\rm P$ (resp. $\mQ$). Set $\alpha\in \N$ and suppose that both samples are mutually independent. Then, if $m=m(n)\to \infty$ and $\frac{m}{n+m}\to \lambda\in (0,1)$,
\begin{equation*}
   {\textstyle\tiny \sqrt{\frac{n\, m}{n+m}}} {\textstyle\small \left(\begin{array}{c}
     f_{\mP_n,\mQ_m} -f_{\mP,\mQ}   \\
        g_{\mP_n,\mQ_m} -g_{\mP,\mQ} 
   \end{array}\right)}\to \epsilon {\textstyle\small\left(\begin{array}{c}
        {\textstyle\tiny      {\textstyle\tiny \sqrt{\lambda}}(1-\AQ\AP)^{-1}\AQ {\bf i}_{\xi_{\mP, \mQ}} \mathbb{G}_{\mP}-\sqrt{1-\lambda}}(1-\AQ\AP)^{-1} {\bf i}_{\xi_{\mQ, \mP}}\mathbb{G}_{\mQ}\\
          {\textstyle\tiny \sqrt{1-\lambda}}\AP (1-\AQ\AP)^{-1} {\bf i}_{\xi_{\mQ, \mP}}\mathbb{G}_{\mQ} -{\textstyle\footnotesize\sqrt{\lambda}}(1-\AP\AQ)^{-1} {\bf i}_{\xi_{\mP, \mQ}}\mathbb{G}_{\mP}  \end{array}\right)}, 
\end{equation*}
weakly in $ \mathcal{C}^{\alpha}(\Omega)\times\mathcal{C}^{\alpha}_0(\Omega)$, where
$\mathbb{G}_{\mP}$ and $ \mathbb{G}_{\mQ}$ are independent $\mP$ and $\mQ$-Brownian bridges. 
 Moreover, in the one-sample case; 
\begin{equation*}
    \sqrt{n}\left(\begin{array}{c}
     f_{\mP_n,\mQ} -f_{\mP,\mQ}   \\
        g_{\mP_n,\mQ} -g_{\mP,\mQ} 
   \end{array}\right)\to -\epsilon\left(\begin{array}{c}
         (1-\AQ\AP)^{-1}\AQ {\bf i}_{\xi_{\mP, \mQ}} \mathbb{G}_{\mP}\\
        - (1-\AP\AQ)^{-1} {\bf i}_{\xi_{\mP, \mQ}} \mathbb{G}_{\mP} \end{array}\right),
\end{equation*}
weakly in  $ \mathcal{C}^{\alpha}(\Omega)\times\mathcal{C}^{\alpha}_0(\Omega)$.
   \end{Corollary}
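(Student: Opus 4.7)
The plan is to combine the linearization in Theorem~\ref{Theorem:TCLpot} with the central limit theorem for the empirical processes $\mathbb{G}_{\mP,s}^n$ and $\mathbb{G}_{\mQ,s}^m$, and then invoke the continuous mapping theorem and Slutsky for the bounded linear operators built out of $\AP$ and $\AQ$.

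First I would multiply both sides of the identity in Theorem~\ref{Theorem:TCLpot} by $\sqrt{nm/(n+m)}$, which turns the remainder into an $o_P(1)$ in $\mathcal{C}^{\alpha}(\Omega)\times\mathcal{C}^{\alpha}(\Omega)$. Writing $\lambda_n=m/(n+m)\to\lambda\in(0,1)$, the identity $\sqrt{nm/(n+m)}=\sqrt{\lambda_n}\,\sqrt{n}=\sqrt{1-\lambda_n}\,\sqrt{m}$ allows one to express the rescaled empirical processes as $\sqrt{\lambda_n}(\sqrt{n}\,\mathbb{G}_{\mP,s}^n)$ and $\sqrt{1-\lambda_n}(\sqrt{m}\,\mathbb{G}_{\mQ,s}^m)$ respectively.

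Next I would invoke the weak limits $\sqrt{n}\,\mathbb{G}_{\mP,s}^n\Rightarrow\mathbb{G}_{\mP,s}$ and $\sqrt{m}\,\mathbb{G}_{\mQ,s}^m\Rightarrow\mathbb{G}_{\mQ,s}$ in $\Cs$, which, as indicated right after the statement of Theorem~\ref{Theorem:TCLpot}, follow from the continuous Sobolev embedding $\Ws\hookrightarrow\Cs$ combined with the central limit theorem in the Hilbert space $\Ws$ (Theorem~10.5 in \cite{LedouxBanach}). Since the two samples are mutually independent by hypothesis and both limits are centered Gaussian with covariances \eqref{covariance}, the joint weak limit in $\Co$ of the rescaled pair is $(\sqrt{1-\lambda}\,\mathbb{G}_{\mQ,s},\sqrt{\lambda}\,\mathbb{G}_{\mP,s})$ with independent coordinates (note that rewriting in terms of the limiting $\lambda$ simply amounts to swapping which factor of $\sqrt{\lambda_n}$ or $\sqrt{1-\lambda_n}$ is attached to which empirical process).

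Finally, the operators $\AP$ and $\AQ$ are bounded on $\mathcal{C}^{\alpha}(\Omega)$ thanks to the uniform smoothness bounds on the density $\xi_{\mP,\mQ}$ provided by \cite{del2022improved}, and the inverses $(1-\AQ\AP)^{-1}$ and $(1-\AP\AQ)^{-1}$ are continuous by the Fredholm-alternative argument already used in the proof of Theorem~\ref{Theorem:TCLpot}. An application of the continuous mapping theorem to the bounded linear map defined by the matrix of operators in \eqref{TclPot} then gives the claimed weak limit, the overall sign being immaterial since centered Gaussian vectors are symmetric. The one-sample case is obtained by setting $\mQ_m=\mQ$, which forces $\mathbb{G}_{\mQ,s}^m\equiv 0$, so only the limit $\sqrt{n}\,\mathbb{G}_{\mP,s}^n\Rightarrow\mathbb{G}_{\mP,s}$ is needed. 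All the analytic difficulty has been absorbed by Theorem~\ref{Theorem:TCLpot}; the only real task remaining for this corollary is to keep careful track of the rates and to verify that independence of the two empirical processes passes to the limit through the continuous linear maps, which it does automatically.
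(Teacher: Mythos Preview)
Your proposal is correct and follows essentially the same route as the paper: the paper likewise derives the Corollary from Theorem~\ref{Theorem:TCLpot} by establishing the weak limit of $(\mathbb{G}_{\mP,s}^n,\mathbb{G}_{\mQ,s}^m)$ via the Hilbert-space CLT in $W^{2\alpha,2}(\Omega)$ combined with the Sobolev embedding into $\mathcal{C}^{\alpha}(\Omega)$, and then applies the continuous mapping theorem through the bounded inverse operator from Lemma~\ref{Lemma:equivalent}. Your observation that the global sign discrepancy between Theorem~\ref{Theorem:TCLpot} and the stated limit is immaterial by symmetry of centered Gaussians is exactly the right way to reconcile the two displays.
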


   \begin{proof}[Proof of Theorem~\ref{Theorem:TCLpot}] We can assume that $\alpha\geq s$ without loss of generality.  In order to simplify the expressions we define the bilinear operator $ \mathbb{A}_\C $ as
\begin{align*}
    \mathbb{A}_\C:  (\mathcal{C}^{\alpha}(\Omega))' \times \mathcal{C}^{\alpha}(\Omega)   &\to \mathcal{C}^{\alpha}(\Omega)\\
    (f, \nu)& \mapsto \left(  \y \mapsto \nu( f(\cdot)\,  \C(\cdot, \y))\right), 
\end{align*} 
which for measures $\nu\in (\mathcal{C}(\Omega))'$ takes the form 
$$  \mathbb{A}_\C(f, \nu)=\int f(\x) \C(\x, \cdot) d\nu(\x). $$
The relation \eqref{optimallityCodt0} means that  $(f_*, g_*),\ (f_{n,m}, g_{n,m})\in \CoAlpha$ are just the solutions of $\Psi(f_*, g_*)=\Psi_{n,m}(f_{n,m}, g_{n,m})=0$, where $$    \Psi ,  \Psi_{n,m}:\CoAlpha \rightarrow \CoAlpha$$ are respectively defined as
\begin{align*}
\text{\small$ \Psi \left(   \begin{array}{c}
         f  \\
         g
    \end{array}\right)= \left(\begin{array}{c}
    f+ \log\left(\mathbb{A}_\C\left(e^{g}, \mQ \right) \right)
    \\
        g+ \log\left(\mathbb{A}_\C\left(e^{f}, \mP \right)\right)-\mQ\left( \log\left(\mathbb{A}_\C\left(e^{f}, \mP \right) \right)\right)
    \end{array}\right)$}
\end{align*}
and 
\begin{align*}
\text{\small$ \Psi_{n,m} \left(   \begin{array}{c}
         f  \\
         g
    \end{array}\right)= \left(\begin{array}{c}
    f+ \log\left(\mathbb{A}_\C\left(e^{g}, \mQ_m \right) \right)
    \\
        g+ \log\left(\mathbb{A}_\C\left(e^{f}, \mP_n \right)\right)-\mQ\left( \log\left(\mathbb{A}_\C\left(e^{f}, \mP_n \right) \right)\right)
    \end{array}\right)$}.
\end{align*}
 Note that we are subtracting the expectation w.r.t. ${\rm Q}$ in the second component so that the image of $  \Psi$ and $  \Psi_{n,m}$ lies in $\CoAlpha$.
The following estimates, derived from
Lemmas~\ref{Lamma_exponential} and \ref{Lemma:uniform_conv},  are fundamental:
\begin{align} 
    \left\| \mathbb{A}_\C(e^{f_{*}}, \mP)-\mathbb{A}_\C(e^{f_{n,m}}, \mP) \right\|_\alpha &=\mathcal{O}_\P\left(\sqrt{\frac{n+m}{n\,m}}\right) \label{Lamma_exponentialEquationOperator1},\\
    \left\| \mathbb{A}_\C(e^{f_{*}}, \mP)-\mathbb{A}_\C(e^{f_{n,m}}, \mP_n) \right\|_\alpha &=\mathcal{O}_\P\left(\sqrt{\frac{n+m}{n\,m}}\right) \label{Lamma_exponentialEquationOperator2},\\
     \left\| \mathbb{A}_\C(e^{f_{*}}, \mP)-\mathbb{A}_\C(e^{f_{*}}, \mP_n) \right\|_\alpha &=\mathcal{O}_\P\left(\sqrt{\frac{n+m}{n\,m}}\right) \label{Lamma_exponentialEquationOperator3}.
\end{align}
The same proof of Lemma~\ref{Lemma:ixi} gives the following result. 
\begin{Lemma}\label{Lemma:BoundOnA}
There exists a constant $C$ such that 
for each $\nu\in (\mathcal{C}^{\alpha}(\Omega))'$ and $f\in \mathcal{C}^{\alpha}(\Omega)$ 
$$ \|\mathbb{A}_\C(f, \nu)\|_{\mathcal{C}^{\alpha}(\Omega)} \leq \|f\|_{\mathcal{C}^{\alpha}(\Omega)}\| \nu \|_{(\mathcal{C}^{\alpha}(\Omega))'}. $$
As a consequence,   $f\mapsto \mathbb{A}_\C(f, \nu)$ and   $\nu\mapsto \mathbb{A}_\C(f, \nu)$ are  bounded operators.  
\end{Lemma}

{\it Step 1:  Fr\'echet diferentiability of $\Psi$.}
The following result yields the Fr\'echet diferentiability of $\Psi$ at $ (f_*, g_*) $, which  implies that 
\begin{equation}\label{limitFrechet}
     \frac{\| {\Psi(f_{n,m}, g_{n,m})-\Psi(f_*, g_*)}-  D_{(f_*, g_*)}\Psi_{\mP,\mQ}(\delta_{n,m}) \|_{\Co}}{\| \delta_{n,m}\|_{\CoAlpha}}\xrightarrow{\P}0.
\end{equation}
for $\delta_{n,m}= (f_{n,m}-f_*, g_{n,m}-g_*)$.
\begin{Lemma}\label{Lemma:Frechet}
The functional $\Psi:\CoAlpha\to \CoAlpha $ is  Fr\'echet differentiable at $(f_*,g_{*})$ with derivative 
$$  \left(\begin{array}{c}
    h_1  \\
       h_2 
   \end{array}\right) \mapsto \left(\begin{array}{c}
    h_1  \\
       h_2 
   \end{array}\right) +\left(\begin{array}{c}
      \AQ h_2    \\
        \bAP h_1 
   \end{array}\right).$$
\end{Lemma}
\begin{proof}
Note that it is enough to check out that the limits
\begin{equation*}
\limsup_{\|h_1\|_{\alpha}\rightarrow 0}\frac{\|\log\left( \mathbb{A}_\C(e^{f_{*}+h_1}, \mP)\right)-\log\left( \mathbb{A}_\C(e^{f_{*}}, \mP)\right)-\AP(h_1)\|_{\alpha}}{\|h_1\|_{\alpha}}
\end{equation*}
and 
\begin{equation*}
\limsup_{\|h_2\|_{\alpha}\rightarrow 0}\frac{\|\log\left( \mathbb{A}_\C(e^{g_{*}+h_2}, \mQ)\right)-\log\left( \mathbb{A}_\C(e^{g_{*}}, \mQ)\right)-\AQ(h_2)\|_{\alpha}}{\|h_1\|_{\alpha}}
\end{equation*}
are $0$. We focus only on the first one,  as  the second follows by the same arguments.
Lemma~\ref{Lemma:Hadamard} yields
$\left\| e^{f^*+h_1}- e^{f^*}-h_1 e^{f^*}\right\|_{\alpha}=o(\|h_1\|_\alpha).$
The linearity and continuity of the operator 
$f\to \mathbb{A}_\C(f, \mP)$ implies its Fr\'echet differentiability. By composition, the operator $f\to \mathbb{A}_\C(e^f, \mP)$ is also Fr\'echet differentiable. Since $ \mathbb{A}_\C(e^f, \mP) >0$, Lemma~\ref{Lemma:Hadamard} applied to the logarithm concludes the proof.
\end{proof}

{\it Step 2: Invertibility of $\CoAlpha$. }
We adopt matrix notation, i.e.,
\begin{multline*}
    \left(\begin{array}{cc}
  (1-\AQ\bAP)^{-1} & -(1-\AQ\bAP)^{-1}\AQ  \\
      - \bAP (1-\AQ\bAP)^{-1} & (1-\bAP\AQ)^{-1} 
   \end{array}\right)\left(\begin{array}{c}
    f  \\
      g
   \end{array}\right)\\
   =\left(\begin{array}{c}
     (1-\AP\bAP)^{-1}f -(1-\AP\bAP)^{-1}\AP g  \\
      - \bAP (1-\AP\bAP)^{-1}f+ (1-\bAP\AQ)^{-1} g
   \end{array}\right).
\end{multline*}
\begin{Lemma}\label{Lemma:equivalent}
Let $\Omega\subset{\R^d}$ be a compact set, ${\rm P}, {\rm Q}\in \mathcal{P}(\Omega)$, then 
\begin{enumerate}[(i)]
   \item \label{one}  $(1-\AQ\bAP)$  and $(1-\bAP\AQ)$ are continuously invertible operators in $\mathcal{C}^{\alpha}(\Omega)$ and $\mathcal{C}^{\alpha}_0(\Omega)$ respectively. 
    \item  \label{two} $\AP=\bAP$ in the space
    $ \{ f\in \mathcal{C}^{\alpha}(\Omega):\quad \int f(\x)d\mP(\x)=0 \}. $
    \item \label{three}the relation 
    \begin{equation*}
    \left(\begin{array}{cc}
  (1-\AQ\bAP)^{-1} & -(1-\AQ\bAP)^{-1}\AQ  \\
      - \bAP (1-\AQ\bAP)^{-1} & (1-\bAP\AQ)^{-1} 
   \end{array}\right)=(D_{(f_*, g_*)}\Psi)^{-1}
\end{equation*}
holds in $\CoAlpha$. 
\end{enumerate}
Moreover, \eqref{one}, \eqref{two} and \eqref{three} also hold in $L^2(\mP)\times L^2_0(\mQ) $ instead of $\CoAlpha$. 
\end{Lemma}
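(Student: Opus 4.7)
The plan is to handle (ii) first by a direct marginal computation, then use it to identify $\bAP^\epsilon$ with $\AP^\epsilon$ on the relevant zero-mean subspaces; deduce (i) by combining compactness with the Fredholm alternative, where injectivity is obtained via a Jensen argument; and finally establish (iii) by formal block-matrix algebra using (i) and one resolvent identity. Throughout I would exploit that the kernel $\xi_{\mP,\mQ}^\epsilon(\x,\y)=\exp\{(f_{\mP,\mQ}(\x)+g_{\mP,\mQ}(\y)-\tfrac12\|\x-\y\|^2)/\epsilon\}$ is smooth and strictly positive on the compact set $\Omega\times\Omega$, which follows from the $\mathcal{C}^\infty$-regularity of Sinkhorn potentials for the quadratic cost.

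For (ii), Fubini's theorem together with the marginal identity $\int \xi_{\mP,\mQ}^\epsilon(\x,\y)d\mQ(\y)=1$ for $\mP$-a.e.\ $\x$ (since $\pi_{\mP,\mQ}^\epsilon$ has $\mP$-marginal) yields, for every $f$ with $\int f d\mP=0$,
\begin{equation*}
\int \AP^\epsilon f(\y)\,d\mQ(\y)\;=\;\int f(\x)\Bigl(\int \xi_{\mP,\mQ}^\epsilon(\x,\y)d\mQ(\y)\Bigr)d\mP(\x)\;=\;\int f\,d\mP\;=\;0,
\end{equation*}
so the centering in $\bAP^\epsilon$ is trivial and $\bAP^\epsilon f=\AP^\epsilon f$.

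For (i), smoothness of the kernel makes $\AP^\epsilon$ and $\AQ^\epsilon$ map bounded subsets of $\mathcal{C}^\alpha(\Omega)$ into bounded subsets of $\mathcal{C}^{\alpha+1}(\Omega)$, which are relatively compact in $\mathcal{C}^\alpha(\Omega)$ by Arzel\`a--Ascoli; hence both $\AQ^\epsilon\bAP^\epsilon$ and $\bAP^\epsilon\AQ^\epsilon$ are compact, and the Fredholm alternative (Theorem~6.6 in \cite{Brezis}) reduces invertibility to triviality of the kernel. To show $(1-\AQ^\epsilon\bAP^\epsilon)u=0$ forces $u=0$, set $w=\bAP^\epsilon u\in\mathcal{C}^\alpha_0(\Omega)$; then $u=\AQ^\epsilon w$ and $w=\bAP^\epsilon\AQ^\epsilon w$. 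Since $\AQ^\epsilon w$ has zero $\mP$-mean (marginal identity in the other coordinate), (ii) upgrades this to $w=\AP^\epsilon\AQ^\epsilon w$. Reading $\AP^\epsilon,\AQ^\epsilon$ as conditional expectations under $\pi_{\mP,\mQ}^\epsilon$, two successive Jensen inequalities give
\begin{equation*}
\|w\|_{L^2(\mQ)}^2\;=\;\E_\pi\bigl[(\E[\E[w(\Y)\,|\,\X]\,|\,\Y])^2\bigr]\;\leq\;\E_\pi[w(\Y)^2]\;=\;\|w\|_{L^2(\mQ)}^2,
\end{equation*}
so both inequalities saturate. The equality cases force $w(\Y)$ to be simultaneously $\sigma(\X)$- and $\sigma(\Y)$-measurable under $\pi$, and strict positivity of $\xi_{\mP,\mQ}^\epsilon$ makes $\pi$ have full product support, so $w$ must be constant; combined with $\int w d\mQ=0$ this yields $w\equiv 0$, whence $u=\AQ^\epsilon w=0$. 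The injectivity of $1-\bAP^\epsilon\AQ^\epsilon$ on $\mathcal{C}^\alpha_0(\Omega)$ follows from the same Jensen computation applied directly. Continuity of the inverses is then automatic from the bounded inverse theorem (Corollary~2.7 in \cite{Brezis}).

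For (iii), I would verify by direct block-matrix multiplication that the candidate matrix inverts $D_{(f_{\mP,\mQ},g_{\mP,\mQ})}\Psi$; the only non-trivial algebraic step is the resolvent identity
\begin{equation*}
(1-\bAP^\epsilon\AQ^\epsilon)^{-1}\;=\;1\,+\,\bAP^\epsilon(1-\AQ^\epsilon\bAP^\epsilon)^{-1}\AQ^\epsilon,
\end{equation*}
obtained by multiplying both sides by $1-\bAP^\epsilon\AQ^\epsilon$ and telescoping via $(1-\AQ^\epsilon\bAP^\epsilon)(1-\AQ^\epsilon\bAP^\epsilon)^{-1}=1$. One also needs to check that the inverse maps $\CoAlpha$ into itself, which is automatic since $\bAP^\epsilon$ always outputs $\mQ$-mean-zero functions and $(1-\bAP^\epsilon\AQ^\epsilon)^{-1}$ preserves $\mathcal{C}^\alpha_0(\Omega)$ by (i). The main obstacle of the whole proof is the injectivity step in (i): one must carefully track which quantities lie in which zero-mean subspace in order to legitimately invoke (ii), and then convert the Jensen equality cases into a rigidity statement using full-support of the coupling, which is the technique the excerpt highlights as worth presenting in detail.
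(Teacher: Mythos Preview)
Your proof is correct and follows the same overall architecture as the paper: (ii) is a marginal computation, (i) is compactness (smoothing plus Arzel\`a--Ascoli) combined with the Fredholm alternative and an injectivity argument based on Jensen's inequality, and (iii) is block-matrix algebra.

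The one genuinely different ingredient is the injectivity step. The paper argues pointwise: taking $\x'$ to maximize $|f|$, it forces equality in Jensen at that point, deduces that $\AP f$ is $\mQ$-a.e.\ constant, and then observes that this makes $\x\mapsto\log\bigl(e^{f_{\mP,\mQ}(\x)}(f(\x)+2\|f\|_\infty)/(c+2\|f\|_\infty)\bigr)$ satisfy the same Sinkhorn fixed-point relation as $f_{\mP,\mQ}$, so uniqueness of the potentials forces $f$ to be constant. You instead work in $L^2$: viewing $\AP^\epsilon,\AQ^\epsilon$ as conditional expectations under $\pi_{\mP,\mQ}^\epsilon$, the equalities $\|w\|_{L^2(\mQ)}=\|\AQ^\epsilon w\|_{L^2(\mP)}=\|\AP^\epsilon\AQ^\epsilon w\|_{L^2(\mQ)}$ force $w(\Y)=\AQ^\epsilon w(\X)$ $\pi$-a.s., and full support of $\pi$ on $\mathrm{supp}(\mP)\times\mathrm{supp}(\mQ)$ makes $w$ constant $\mQ$-a.s. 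Your route avoids invoking uniqueness of potentials and is closer to the standard ``Markov operator with strictly positive kernel has simple top eigenvalue'' argument; the paper's route is more specific to the Sinkhorn structure. One small point worth making explicit in your write-up: the full-support conclusion only gives $w=0$ on $\mathrm{supp}(\mQ)$, but this bootstraps to $w\equiv 0$ on all of $\Omega$ via $w=\AP^\epsilon\AQ^\epsilon w$ and $\AQ^\epsilon w=0$, which then also yields $u=\AQ^\epsilon w=0$ everywhere.
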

\begin{proof}
If we show that the operators $\AQ$  $\bAP$ are compact and that $(1-\bAP\AQ)$ and $(1-\AQ\bAP)$ are injective,  the Fredholm alternative  \cite[Theorem 6.6]{Brezis} and continuous inverse theorem  \cite[Corollary 2.7]{Brezis} would prove \eqref{one}.  Claim  \eqref{two} follows by \eqref{proofodii} below and the last claim by basic algebra. The proof in 
$L^2(\mP)\times L^2_0(\mQ) $
  is the same; to avoid repeating arguments, we leave the details to the reader.

{\it Compactness of $\AP$ and $\AQ$:}
Let $\{f_k\}_{k\in \N}$ be a bounded sequence in $\mathcal{C}^{\alpha}(\Omega)$. Since the sequence 
\begin{equation}
    \label{boundedDer}
    \AP f_k=\int \xi_{\mP,\mQ}(\x,\cdot)f_k(\x)d\mP(\x)\in \mathcal{C}^{\alpha+1}(\Omega), \quad  k\in N
\end{equation}
has its derivatives up to order $\alpha+1$  uniformly bounded by some $C(\Omega, \alpha, d)$ \cite[Lemma~4.3]{del2022improved},   the Arzelà--Ascoli  theorem yields the relatively compactness of $\{\AP f_k\}_{k\in \N}$ in $\mathcal{C}^{\alpha}(\Omega)$. The same argument applies to  $\AQ$.

{\it  Injectivity of Fredholm operators:}
We prove it by \textit{reductio ad absurdum}. Suppose that $\AQ\bAP f=\AQ(\AP-\mQ \AP)f=\lambda f$, for some  nonzero  $f\in \mathcal{C}^{\alpha}({\Omega})$ and $\lambda\in \{-1, 1\} $. In this case, since the optimality condition \eqref{optimallityCodt01} implies
\begin{equation}
    \label{proofodii}
  \mP \AQ \AP f=   \mQ \AP f=\iint \xi_{\mP,\mQ}(\x,\y)f(\x)d\mP(\x)d\mQ(\y)= \lambda \int fd\mP,
\end{equation}
 we have
 \begin{multline*}
  \mP  \AQ\bAP f= \iint\xi_{\mP,\mQ}(\x,\y)((\AP-\mQ \AP)f)(\y) d\mQ(\y)d\mP(\x)=\lambda \mQ(\AP-\mQ \AP)f=0, 
 \end{multline*}
 and  hence  by assumption $ 0=\int f d\mP  = \mQ \AP f $.  We therefore obtain 
\begin{align*}
 f(\x')^2= (\AQ \bAP f(\x') )^2= \ER (  \AQ \ER \AP f(\x') )^2= \left( \int \xi_{\mP,\mQ}(\x',\y)\int \xi_{\mP,\mQ}(\x,\y)f(\x)d\mP(\x)d\mQ(\y)\right)^2
\end{align*}
 for all $\x'\in \Omega.$ On the other hand, Jensen's inequality gives
\begin{align*}
 f(\x')^2  \leq  \int \xi_{\mP,\mQ}(\x',\y)\left( \int \xi_{\mP,\mQ}(\x,\y)f(\x) d\mP(\x)\right)^2 d\mQ(\y)\leq  \|f\|_{\infty}^2, \quad \text{for all $\x'\in \Omega$},
\end{align*}
and the first inequality is strict unless 
\begin{equation}
    \label{finalabs}
    \text{$\int \xi_{\mP,\mQ}(\x,\y)f(\x)\mP(\x)=c\in \R$  for   $\mQ$-a.e. $\y$.}
\end{equation}
 Since there exists $\x'\in \Omega$  such that $  \mid f(\x') \mid=\|f\|_{\infty}$, \eqref{finalabs} holds. Therefore,  $$ \bar{f}_{\mP,\mQ}: \x\mapsto\log\left(\frac{e^{f_{\mP,\mQ}(\x)}(f(\x)+1+\|f\|_{\infty})}{c+1+\|f\|_{\infty}}\right)$$ 
is a solution of the dual problem \eqref{dual_entrop}.  Since it is unique, we obtain $\frac{e^{f_{\mP,\mQ}(\x)}(f(\x)+1+\|f\|_{\infty})}{c+1+\|f\|_{\infty}}=e^{f_{\mP,\mQ}(\x)}$ and $f=c$ in $\Omega$. Since $f$ is centered, we conclude $f=0$.
\end{proof} 
{\it Step 3: Linearization of $ \Psi_{n,m}- \Psi $.} The optimality condition $\Psi_{n,m}\left(\begin{array}{c}
     f_{n,m} \\
   g_{n,m}
\end{array}\right)=\Psi\left(\begin{array}{c}
     f_* \\
g_*
\end{array}\right)=0$ and \eqref{limitFrechet} yields 
\begin{equation}\label{toeachange}
\Psi\left(\begin{array}{c}
     f_{n,m} \\
    g_{n,m} 
\end{array}\right)-\Psi_{n,m}\left(\begin{array}{c}
     f_{n,m} \\
    g_{n,m} 
\end{array}\right)= D_{(f_*, g_*)}\Psi_{\mP,\mQ}(\delta_{n,m}) +o_{\P}\left(\sqrt{ \frac{n+m}{n\,m}}\right).
\end{equation}
The following result allows to exchange, up to additive $o_{\P}\left(\sqrt{ \frac{n+m}{n\,m}}\right)$ terms, $(f_{n,m},g_{n,m})$ by $(f_{*},g_{*})$ in \eqref{toeachange}.
\begin{Lemma}\label{Lemma:Linearization}
The asymptotic equality
\begin{equation*}
\Psi\left(\begin{array}{c}
     f_{n,m} \\
    g_{n,m} 
\end{array}\right)-\Psi_{ n, m }\left(\begin{array}{c}
      f_{n, m}  \\
 	  g_{n, m} 
\end{array}\right)=\Psi\left(\begin{array}{c}
     f_{*} \\
   g_{*} 
\end{array}\right)-\Psi_{n,m}\left(\begin{array}{c}
     f_{*} \\
   g_{*} 
\end{array}\right)+o_{\P}\left(\sqrt{ \frac{n+m}{n\,m}}\right)
\end{equation*}
holds in $\CoAlpha$. As a consequence, 
\begin{equation*}
   \Psi\left(\begin{array}{c}
     f_{*} \\
    g_{*} 
\end{array}\right)-\Psi_{n,m}\left(\begin{array}{c}
     f_{*} \\
    g_{*} 
\end{array}\right)= D_{(f_*, g_*)}\Psi_{\mP,\mQ}(\delta_{n,m}) +o_{\P}\left(\sqrt{ \frac{n+m}{n\,m}}\right)
\end{equation*}
holds in $\CoAlpha$. 
\end{Lemma} 
\begin{proof} 

It is enough to show that 
\begin{equation}\label{auxiliaryLemmafirstCoord}
    \log\left(\mathbb{A}_\C(e^{f_{n,m}}, \mP)\right)-\log\left(\mathbb{A}_\C(e^{f_{n,m}}, \mP_n)\right)
    -\log\left(\mathbb{A}_\C(e^{f_{*}}, \mP)\right)+\log\left(\mathbb{A}_\C(e^{f_{*}}, \mP_n)\right)
\end{equation}
and 
$$ 
 \log\left(\mathbb{A}_\C(e^{g_{n,m}}, \mQ)\right)-\log\left(\mathbb{A}_\C(e^{g_{n,m}}, \mQ_m)\right)
    -\log\left(\mathbb{A}_\C(e^{g_{*}}, \mQ)\right)+\log\left(\mathbb{A}_\C(e^{g_{*}}, \mQ_m)\right)
$$
are both $o_{\P}\left( \sqrt{\frac{n+ m}{n\, m}}\right)$ in $\mathcal{C}^{\alpha}(\Omega)$. To avoid repeated arguments we only prove this claim for \eqref{auxiliaryLemmafirstCoord}. 
 In accordance with Lemma~\ref{Lemma:Hadamard}---applied to the natural logarithm---and equations \eqref{Lamma_exponentialEquationOperator1}, \eqref{Lamma_exponentialEquationOperator2} and \eqref{Lamma_exponentialEquationOperator3}   we obtain the estimates
 \begin{align*}
    \log\left(\mathbb{A}_\C(e^{f_{n,m}}, \mP)\right)&=   \log\left(\mathbb{A}_\C(e^{f_{*}}, \mP)\right)+\frac{ \mathbb{A}_\C(e^{f_{n,m}}, \mP)}{ \mathbb{A}_\C(e^{f_{*}}, \mP)}-1
    +o_{\P}\left(\sqrt{\frac{n+m}{n\,m}}\right),\\
     \log\left(\mathbb{A}_\C(e^{f_{n,m}}, \mP_n)\right)&=\log\left(\mathbb{A}_\C(e^{f_{*}}, \mP)\right)+\frac{\mathbb{A}_\C(e^{f_{n,m}}, \mP_n)}{\mathbb{A}_\C(e^{f_{*}}, \mP)}-1+o_{\P}\left(\sqrt{\frac{n+m}{n\,m}}\right),
     \\
     \log\left(\mathbb{A}_\C(e^{f_{*}}, \mP_n)\right)&=\log\left(\mathbb{A}_\C(e^{f_{*}}, \mP)\right)+\frac{\mathbb{A}_\C(e^{f_{*}}, \mP_n)}{\mathbb{A}_\C(e^{f_{*}}, \mP)}-1+o_{\P}\left(\sqrt{\frac{n+m}{n\,m}}\right).
 \end{align*}
From here we derive the the following rewriting of \eqref{auxiliaryLemmafirstCoord}; 
\begin{equation}\label{ProofLermma24relation4}
  \frac{\mathbb{A}_\C(e^{f_{n,m}}-e^{f_*}, \mP_n-\mP)}{\mathbb{A}_\C(e^{f_{*}}, \mP)}+o_{\P}\left(\sqrt{\frac{n+m}{n\,m}}\right).
\end{equation}
Then, since $\mathbb{A}_\C(e^{f_{*}}, \mP)$ is bounded away from $0$ and infinity, by Lemma~\ref{BanachAlgebra} the proof is concluded by showing 
$
{\mathbb{A}_\C(e^{f_{n,m}}-e^{f_*}, \mP_n-\mP)}=o_{\P}\left(\sqrt{\frac{n+m}{n\,m}}\right).
$ That holds due to Lemma~\ref{Lemma:BoundOnA}, \eqref{Inequqlitt2} and the fact that the unit ball of $\mathcal{C}^{\alpha}(\Omega)$ is uniformly Donsker. 

\end{proof}
With the results we have obtained, we are now in a position to complete the proof of the theorem. For $\alpha\geq s$, it holds that 
$ \sqrt{ \frac{n\,m}{n+m}}\| \mP_n-\mP  \|_{(\mathcal{C}^{\alpha}(\Omega))'} $ and $ \sqrt{ \frac{n\,m}{n+m}}\| \mQ_m-\mQ  \|_{(\mathcal{C}^{\alpha}(\Omega))'} $ are $ \mathcal{O}_\P(1)$.   Hence Lemma~\ref{Lemma:BoundOnA} and Lemma~\ref{Lemma:Hadamard} (applied to the natural logarithm) yield
\begin{align*}
     &\Psi\left(\begin{array}{c}
     f_{*} \\
   g_{*} 
\end{array}\right)-\Psi_{n,m}\left(\begin{array}{c}
     f_{*} \\
   g_{*} 
\end{array}\right)\\
&=\left(\begin{array}{c}
    \log\left(\mathbb{A}_\C(e^{g_*}, \mQ) \right) -\log\left(\mathbb{A}_\C(e^{g_*}, \mQ_m) \right) 
    \\
       \log\left(\mathbb{A}_\C(e^{f_*}, \mP) \right)-\log\left(\mathbb{A}_\C(e^{f_*}, \mP_n) -\mQ(\log\left(\mathbb{A}_\C(e^{f_*}, \mP) \right)-\log\left(\mathbb{A}_\C(e^{f_*}, \mP_n) \right))\right)
    \end{array}\right)
    \\&=\left(\begin{array}{c}
   e^{f_*} \mathbb{A}_\C(e^{g_*}, \mQ-\mQ_m)  
    \\
       e^{g_*} \mathbb{A}_\C(e^{f_*}, \mP-\mP_n) 
    \end{array}\right)+o_\P\left( \sqrt{\frac{n+m}{n\, m} }\right)\quad {\rm in}\  \mathcal{C}^{\alpha}(\Omega)\times \mathcal{C}^{\alpha}_0(\Omega).
\end{align*}
 Lemmas~\ref{Lemma:equivalent} and~\ref{Lemma:Linearization}  give 
$$  \left(\begin{array}{c}
    f_{n,m}- f_{*} \\
  g_{n,m}- g_{*}
\end{array}\right) =(D_{(f_*, g_*)}\Psi)^{-1}\left(\begin{array}{c}
   e^{f_*} \mathbb{A}_\C(e^{g_*}, \mQ-\mQ_m)  
    \\
       e^{g_*} \mathbb{A}_\C(e^{f_*}, \mP-\mP_n) 
    \end{array}\right)+o_\P\left( \sqrt{\frac{n+m}{n\, m} }\right)\quad {\rm in}\  \mathcal{C}^{\alpha}(\Omega)\times \mathcal{C}^{\alpha}_0(\Omega). $$
As a consequence,
\begin{multline}
    \label{beforeexchanging}
     \left(\begin{array}{c}
        f_{\mP_n,\mQ_m}- f_{\mP,\mQ}  \\
        g_{\mP_n,\mQ_m}- g_{\mP,\mQ}  
    \end{array}\right)
    = \epsilon \left(\begin{array}{c}
           (1-\AQ\bAP)^{-1}\AQ {\bf i}_{\xi_{\mP, \mQ}}  (\mP_n-\mP)-(1-\AQ\bAP)^{-1} {\bf i}_{\xi_{\mQ, \mP} } (\mQ_m-\mQ) \\
    \bAP (1-\AQ\bAP)^{-1}  {\bf i}_{\xi_{\mQ, \mP} } (\mQ_m-\mQ)-(1-\bAP\AQ)^{-1} {\bf i}_{\xi_{\mP, \mQ}}  (\mP_n-\mP) \end{array}\right)\\+o_{\P}\left(\sqrt{\frac{n+ m }{n\,m}}\right). 
\end{multline}
Since 
$\int {\bf i}_{\xi_{\mP, \mQ}}  (\mP_n-\mP) d\mQ= \frac{1}{n} \sum_{k=1}^n \int\xi_{\mP,\mQ}(\X_k,\y)-\E\left( \xi_{\mP,\mQ}(\X, \y)\right)d\mQ(\y)=0,$
in view of Lemma~\ref{Lemma:equivalent}~\eqref{two}, we can thus exchange $\bAP$ by $\AP$ in \eqref{beforeexchanging} and the proof of the theorem is completed.
\end{proof}

\section{Central limit theorem for the solution of the primal problem and Sinkhorn distances}\label{sec:TCLcouplings}
This section covers the weak limit of the quantity
\begin{equation}
    \label{thequqntity}
      \sqrt{\frac{n\, m}{n+m}}\int \eta\, (d\pi_{\mP_n,\mQ_m} -d\pi_{\mP,\mQ} ), \quad \text{where $\eta\in L^\infty(\mP\otimes\mQ)$.}
\end{equation}

Before stating this result, for a fixed function $\eta\in L^\infty(\mP\otimes \mQ)$ we introduce the notation;
\begin{align*}
    &\eta_{\x} :\x\to \int \eta(\x,\y)\xi_{\mP,\mQ} (\x,\y)d\mQ(\y)\ \ \text{and}\ \  
     &\eta_{\y} :\y\to \int \eta(\x,\y)\xi_{\mP,\mQ} (\x,\y)d\mP(\x),
\end{align*}
which substantially simplifies the description of the first-order decomposition of \eqref{thequqntity}, described in the following theorem.
\begin{Theorem}\label{Theorem:TCLcouplings}
Let $\Omega\subset{\R^d}$ be a compact set, ${\rm P}, {\rm Q}\in \mathcal{P}(\Omega)$ and ${\rm P}_n$ (resp. $\mQ_m$) be the empirical measure of the i.i.d. sample $\X_1,\dots,\X_n$ (resp. $\Y_1,\dots,\Y_m$) distributed as $\rm P$ (resp. $\mQ$). Then, if $m=m(n)\to \infty$, $\frac{m}{n+m}\to \lambda\in (0,1)$ and $\eta\in L^\infty(\mP\otimes \mQ)$,
\begin{multline*}
    \int \eta\, (d\pi_{\mP_n,\mQ_m} -d\pi_{\mP,\mQ} )\\
    ={  {\frac{1}{{n}}}\sum_{k=1}^n (1-\AQ\AP)^{-1}\big(\eta_{\x} -\AQ\eta_{\y} \big)(\X_k)+   { \frac{1}{{m}}}{\sum_{j=1}^m (1-\AP\AQ)^{-1}\big(\eta_{\y} -\AP\eta_{\x} \big)(\Y_j)}}\\
    +o_{\P}\left(\sqrt{\frac{n+ m}{n\,m}}\right).
\end{multline*}
\end{Theorem}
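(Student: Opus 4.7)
The plan is to reduce the statement to the linearization of the potentials already established in Theorem~\ref{Theorem:TCLpot}, add an elementary empirical-process contribution, and then perform operator algebra that produces the resolvent expressions $(1-\AQ^{\epsilon}\AP^{\epsilon})^{-1}$ and $(1-\AP^{\epsilon}\AQ^{\epsilon})^{-1}$. Writing $\xi_{\mP_n,\mQ_m}^{\epsilon}=\xi_{\mP,\mQ}^{\epsilon}\exp((\Delta f+\Delta g)/\epsilon)$ with $\Delta f=f_{\mP_n,\mQ_m}^{\epsilon}-f_{\mP,\mQ}^{\epsilon}$ and $\Delta g=g_{\mP_n,\mQ_m}^{\epsilon}-g_{\mP,\mQ}^{\epsilon}$, I would start from the splitting
\[\int\eta\,(d\pi_{\mP_n,\mQ_m}^{\epsilon}-d\pi_{\mP,\mQ}^{\epsilon})=\int\eta\,(\xi_{\mP_n,\mQ_m}^{\epsilon}-\xi_{\mP,\mQ}^{\epsilon})\,d\mP_nd\mQ_m+\int\eta\,\xi_{\mP,\mQ}^{\epsilon}\,d(\mP_n\times\mQ_m-\mP\times\mQ)\]
and treat each piece separately.

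For the measure-change term, expanding $d\mP_nd\mQ_m-d\mP d\mQ$ as $d(\mP_n-\mP)d\mQ+d\mP\,d(\mQ_m-\mQ)+d(\mP_n-\mP)d(\mQ_m-\mQ)$ produces the centered empirical averages $\frac{1}{n}\sum_k\eta_{\x}^{\epsilon}(\X_k)-\E\eta_{\x}^{\epsilon}(\X)$ and $\frac{1}{m}\sum_j\eta_{\y}^{\epsilon}(\Y_j)-\E\eta_{\y}^{\epsilon}(\Y)$, plus a degenerate cross term of size $O_P(1/\sqrt{nm})=o_P(\sqrt{(n+m)/(nm)})$. For the integrand-change term, I would Taylor-expand the exponential; Theorem~4.5 of \cite{del2022improved} provides $\|\Delta f\|_{\alpha}+\|\Delta g\|_{\alpha}=O_P(\sqrt{(n+m)/(nm)})$, and together with the uniform boundedness of $\xi_{\mP,\mQ}^{\epsilon}$ this bounds the quadratic remainder by $O_P((n+m)/(nm))$ when integrated against any $\eta\in L^2(\mP\times\mQ)$. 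A further cross-term estimate of the same flavor lets me replace $d\mP_nd\mQ_m$ by $d\mP d\mQ$ in the surviving linear term, leaving $\tfrac{1}{\epsilon}\langle\Delta f,\eta_{\x}^{\epsilon}\rangle_{L^2(\mP)}+\tfrac{1}{\epsilon}\langle\Delta g,\eta_{\y}^{\epsilon}\rangle_{L^2(\mQ)}$.

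I would then substitute the linearization of $\Delta f$ and $\Delta g$ given by Theorem~\ref{Theorem:TCLpot}. Since $\AP^{\epsilon}$ and $\AQ^{\epsilon}$ are mutually adjoint between $L^2(\mP)$ and $L^2(\mQ)$, the operators $(1-\AQ^{\epsilon}\AP^{\epsilon})^{-1}$ and $(1-\AP^{\epsilon}\AQ^{\epsilon})^{-1}$ are self-adjoint in their respective spaces, and I can shift the resolvents onto $\eta_{\x}^{\epsilon},\eta_{\y}^{\epsilon}$. The identity $\langle\phi,\mathbb{G}_{\mP,s}^n\rangle_{L^2(\mQ)}=\frac{1}{n}\sum_k(\AQ^{\epsilon}\phi)(\X_k)-\E(\AQ^{\epsilon}\phi)(\X)$ and its $\mQ$-analogue turn the inner products against the empirical processes into centered empirical sums, and the algebraic identities $(1-\AQ^{\epsilon}\AP^{\epsilon})^{-1}\AQ^{\epsilon}=\AQ^{\epsilon}(1-\AP^{\epsilon}\AQ^{\epsilon})^{-1}$ and $\AQ^{\epsilon}\AP^{\epsilon}(1-\AQ^{\epsilon}\AP^{\epsilon})^{-1}=(1-\AQ^{\epsilon}\AP^{\epsilon})^{-1}-1$ yield the cancellations that combine these new sums with the $\eta_{\x}^{\epsilon}(\X_k),\eta_{\y}^{\epsilon}(\Y_j)$ contributions from the measure-change term into the single clean expressions $(1-\AQ^{\epsilon}\AP^{\epsilon})^{-1}(\eta_{\x}^{\epsilon}-\AQ^{\epsilon}\eta_{\y}^{\epsilon})$ and $(1-\AP^{\epsilon}\AQ^{\epsilon})^{-1}(\eta_{\y}^{\epsilon}-\AP^{\epsilon}\eta_{\x}^{\epsilon})$. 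The sums can be written without explicit recentering because $\eta_{\x}^{\epsilon}-\AQ^{\epsilon}\eta_{\y}^{\epsilon}$ has zero $\mP$-mean, $\eta_{\y}^{\epsilon}-\AP^{\epsilon}\eta_{\x}^{\epsilon}$ has zero $\mQ$-mean, and the resolvents preserve these centering properties.

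The hardest step will be controlling all the remainders uniformly when $\eta$ is only square-integrable rather than continuous. The Taylor remainder and the various measure-swap cross terms involve $\eta$ multiplied by uniformly $O_P(\sqrt{(n+m)/(nm)})$ factors integrated against products of empirical measures; I would close these estimates by combining Cauchy--Schwarz in $L^2(\mP\times\mQ)$ with the uniform $\mathcal{C}^{\alpha}$-bounds on $\Delta f,\Delta g$, and, for the truly degenerate piece $\int h\,d(\mP_n-\mP)d(\mQ_m-\mQ)$, by a $U$-statistic CLT (or equivalently an $L^2(\mP\times\mQ)$-density approximation by smooth $\eta$'s combined with continuity of both sides in $\eta$) in order to keep every error of size $o_P(\sqrt{(n+m)/(nm)})$.
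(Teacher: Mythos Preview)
Your proposal is correct and follows essentially the same route as the paper: the same splitting into an integrand-change piece and a measure-change piece, Taylor expansion of $\xi_{\mP_n,\mQ_m}^{\epsilon}/\xi_{\mP,\mQ}^{\epsilon}$, insertion of the potential linearization from Theorem~\ref{Theorem:TCLpot}, and the same adjointness/resolvent identities to collapse everything into $(1-\AQ^{\epsilon}\AP^{\epsilon})^{-1}(\eta_{\x}^{\epsilon}-\AQ^{\epsilon}\eta_{\y}^{\epsilon})$ and its symmetric counterpart. The paper is only more explicit than you are on one point: to kill the cross terms with a general $\eta\in L^2(\mP\times\mQ)$, it factors them as $\|e^{h_{\mP_n,\mQ_m}}-e^{h_{\mP,\mQ}}\|_{\mathcal{C}^s(\Omega\times\Omega)}$ times $\sup_{\|h\|_{\mathcal{C}^s}\le 1}\int \eta\,e^{-\frac{1}{2}\|\cdot\|^2}h\,d\mQ_m(d\mP_n-d\mP)$ and invokes a Glivenko--Cantelli bound on the latter class (with $\eta$ as fixed envelope), rather than Cauchy--Schwarz or approximation by smooth $\eta$---but your alternatives would also close the argument.
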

Theorem~\ref{Theorem:TCLcouplings} gives the first-order decomposition of the solutions of the  regularized optimal transport problem \ref{kanto_entrop}. We recall that, for a regularization based on the Schr\"{o}dinger bridge, \cite{ChaosDecom} arrived at exactly the same thing, conjecturing the truth of Theorem~\ref{Theorem:TCLcouplings}. The techniques  of the proofs are completely different; ours is based on the theory of empirical processes, while that of \cite{ChaosDecom} on a change of measure and projections in $L^\infty(\mP\otimes\mQ)$.


\begin{Corollary}\label{Corollary:TCLcouplings}
Let $\Omega\subset{\R^d}$ be a compact set, ${\rm P}, {\rm Q}\in \mathcal{P}(\Omega)$ and ${\rm P}_n$ (resp. $\mQ_m$) be the empirical measure of the i.i.d. sample $\X_1,\dots,\X_n$ (resp. $\Y_1,\dots,\Y_m$) distributed as $\rm P$ (resp. $\mQ$). Then, if $m=m(n)\to \infty$, $\frac{m}{n+m}\to \lambda\in (0,1)$ and $\eta\in L^\infty(\mP\otimes \mQ)$,
\begin{equation*}
    \sqrt{\frac{n\, m}{n+m}}\left(\int \eta d \pi_{\mP_n,\mQ_m} -\int \eta d \pi_{\mP,\mQ}  \right)\xrightarrow{w} N(0,\sigma^2_{\lambda, \epsilon}(\eta)),
\end{equation*}
where the variance $\sigma^2_{\lambda,\epsilon}(\eta)$ is 
\begin{multline*}
    \lambda \operatorname{Var}_{\X\sim \mP}\left[ (1-\AQ\AP)^{-1}\big(\eta_{\x} -\AQ\eta_{\y} \big)(\X)\right]\\+ (1-\lambda )\operatorname{Var}_{\Y\sim \mQ}\left[ (1-\AP\AQ)^{-1}\big(\eta_{\y} -\AP\eta_{\x} \big)(\Y)\right].
\end{multline*}
Moreover, in the one-sample case we have
\begin{equation*}
    \sqrt{n}\left(\int \eta d \pi_{\mP_n,\mQ} -\int \eta d \pi_{\mP_n,\mQ}  \right)\xrightarrow{w}  N(0,\sigma^2_{\mP,  \epsilon }(\eta)),
\end{equation*}
with 
$  \sigma^2_{\mP,  \epsilon }(\eta)=\operatorname{Var}_{\X\sim \mP}\left[ (1-\AQ\AP)^{-1}\big(\eta_{\x} -\AQ\eta_{\y} \big)(\X)\right].$
\end{Corollary}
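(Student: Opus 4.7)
The plan is to read off Corollary~\ref{Corollary:TCLcouplings} directly from the first-order expansion provided by Theorem~\ref{Theorem:TCLcouplings} by applying the classical univariate central limit theorem to each of the two centered i.i.d. sums that appear on the right-hand side.

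I would begin by setting
$$T_1^{\epsilon}=(1-\AQ^{\epsilon}\AP^{\epsilon})^{-1}\bigl(\eta_{\x}^{\epsilon}-\AQ^{\epsilon}\eta_{\y}^{\epsilon}\bigr),\qquad T_2^{\epsilon}=(1-\AP^{\epsilon}\AQ^{\epsilon})^{-1}\bigl(\eta_{\y}^{\epsilon}-\AP^{\epsilon}\eta_{\x}^{\epsilon}\bigr),$$
so that multiplying the linearization by $\sqrt{nm/(n+m)}$ yields
$$\sqrt{\tfrac{nm}{n+m}}\!\int\!\eta\,(d\pi_{\mP_n,\mQ_m}^{\epsilon}-d\pi_{\mP,\mQ}^{\epsilon})=\sqrt{\tfrac{m}{n+m}}\,\tfrac{1}{\sqrt{n}}\!\sum_{k=1}^n T_1^{\epsilon}(\X_k)+\sqrt{\tfrac{n}{n+m}}\,\tfrac{1}{\sqrt{m}}\!\sum_{j=1}^m T_2^{\epsilon}(\Y_j)+o_P(1).$$
The paragraph just preceding the corollary already shows that both summands are centered, namely $\E T_1^{\epsilon}(\X)=\E T_2^{\epsilon}(\Y)=0$, via expansion of $(1-\AQ^{\epsilon}\AP^{\epsilon})^{-1}$ in its Neumann series together with the identity $\mP\,\AQ^{\epsilon}=\E$ coming from Lemma~\ref{Lemma:equivalent}~(ii) and the optimality condition.

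Next I would verify the second-moment hypothesis. Since $\xi_{\mP,\mQ}^{\epsilon}$ is bounded on the compact set $\Omega\times\Omega$ by the regularity estimates of \cite{del2022improved}, the Cauchy--Schwarz inequality yields $\|\eta_{\x}^{\epsilon}\|_{L^2(\mP)}\vee\|\eta_{\y}^{\epsilon}\|_{L^2(\mQ)}\leq C_{\epsilon}\|\eta\|_{L^2(\mP\times\mQ)}$, and the continuous invertibility supplied by Lemma~\ref{Lemma:equivalent}~(i) together with the boundedness of $\AP^{\epsilon},\AQ^{\epsilon}$ on the relevant spaces guarantees $T_1^{\epsilon}\in L^2(\mP)$ and $T_2^{\epsilon}\in L^2(\mQ)$. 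The classical univariate CLT then provides
$$\tfrac{1}{\sqrt{n}}\sum_{k=1}^n T_1^{\epsilon}(\X_k)\xrightarrow{w}N\bigl(0,\operatorname{Var}_{\X\sim\mP}(T_1^{\epsilon}(\X))\bigr),\qquad \tfrac{1}{\sqrt{m}}\sum_{j=1}^m T_2^{\epsilon}(\Y_j)\xrightarrow{w}N\bigl(0,\operatorname{Var}_{\Y\sim\mQ}(T_2^{\epsilon}(\Y))\bigr),$$
and the two Gaussian limits are independent by the independence of the $\X$- and $\Y$-samples.

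To conclude, Slutsky's lemma applied with the deterministic weights $\sqrt{m/(n+m)}\to\sqrt{\lambda}$ and $\sqrt{n/(n+m)}\to\sqrt{1-\lambda}$ converts the right-hand side into a single centered Gaussian with variance $\lambda\operatorname{Var}_{\X\sim\mP}(T_1^{\epsilon}(\X))+(1-\lambda)\operatorname{Var}_{\Y\sim\mQ}(T_2^{\epsilon}(\Y))=\sigma_{\lambda,\epsilon}^2(\eta)$. The one-sample case is obtained identically: freezing $\mQ_m\equiv\mQ$ annihilates the $T_2^{\epsilon}$-sum so that only the contribution of $T_1^{\epsilon}(\X)$ survives, producing the variance $\sigma_{\mP}^2(\eta)$. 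All the real work has been absorbed into Theorem~\ref{Theorem:TCLcouplings}; the only residual step here is the finite-variance verification for $T_1^{\epsilon}$ and $T_2^{\epsilon}$, which is essentially immediate from the compact-support regularity of the Sinkhorn density $\xi_{\mP,\mQ}^{\epsilon}$.
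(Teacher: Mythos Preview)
Your proof is correct and follows the same route as the paper, which simply records that the linearization of Theorem~\ref{Theorem:TCLcouplings} is centered and then invokes the classical CLT plus Slutsky. One small caveat: Lemma~\ref{Lemma:equivalent}\,(i) is stated for $\mathcal{C}^{\alpha}$ rather than $L^2$, but since $\AQ^{\epsilon}\AP^{\epsilon}$ maps $L^2(\mP)$ into $\mathcal{C}^{\alpha}(\Omega)$, any $L^2$ solution of $(1-\AQ^{\epsilon}\AP^{\epsilon})f=0$ is automatically in $\mathcal{C}^{\alpha}$ and hence zero, so the same Fredholm argument yields the $L^2$ invertibility you use.
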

\subsection{Applications}
An immediate corollary of Theorem~\ref{Theorem:TCLcouplings}  is its application to the square  norm function, where we obtain the weak limit of the Sinkhorn cost introduced in \cite{Cut13}. Formally it is defined as 
\begin{equation*}
    d_S (\mP,\mQ)=\E_{(\X,\Y)\sim \pi_{\mP,\mQ} }\left[\frac{\|\X-\Y\|^2}{2}\right],
\end{equation*}
and represents the cost of `transporting mass' from $\mP$ to $\mQ$ when using the coupling given by the entropic regularization. 
\begin{Corollary}\label{Corollary:TCLcouplingsDiv}
Let $\Omega\subset{\R^d}$ be a compact set, ${\rm P}, {\rm Q}\in \mathcal{P}(\Omega)$ and ${\rm P}_n$ (resp. $\mQ_m$) be the empirical measure of the i.i.d. sample $\X_1,\dots,\X_n$ (resp. $\Y_1,\dots,\Y_m$) distributed as $\rm P$ (resp. $\mQ$). Then, if $m=m(n)\to \infty$, $\frac{m}{n+m}\to \lambda\in (0,1)$,
\begin{equation*}
    \sqrt{\frac{n\, m}{n+m}}\left( d_S (\mP_n,\mQ_m)- d_S (\mP,\mQ) \right)\longrightarrow N\left(0,\sigma^2_{\lambda, \epsilon}\left(\frac{\|\cdot-\cdot \|^2}{2}\right)\right),\ \ \text{weakly},
\end{equation*}
where the variance $\sigma^2_{\lambda,\epsilon}(\frac{\|\cdot-\cdot \|^2}{2})$ is defined in Corollary~\ref{Corollary:TCLcouplings}  for the function $(\x,\y)\to \frac{\|\x-\y \|^2}{2}$.
Moreover, in the one-sample case we have
\begin{equation*}
    \sqrt{n}\left( d_S (\mP_n,\mQ)- d_S (\mP,\mQ) \right)\longrightarrow N\left(0,\sigma^2_{\mP, \epsilon}\left(\frac{\|\cdot-\cdot \|^2}{2}\right)\right),\ \ \text{weakly},
\end{equation*}
with $\sigma^2_{\mP, \epsilon}\left(\frac{\|\cdot-\cdot \|^2}{2}\right)$ as in Corollary~\ref{Corollary:TCLcouplings} .
\end{Corollary}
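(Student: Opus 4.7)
The plan is essentially to recognize Corollary~\ref{Corollary:TCLcouplingsDiv} as a direct specialization of Corollary~\ref{Corollary:TCLcouplings} to a particular cost function and verify the hypothesis. Observe that by definition
\begin{equation*}
    d_S^{\epsilon}(\mP,\mQ) = \int \tfrac{1}{2}\|\x-\y\|^2 \, d\pi_{\mP,\mQ}^{\epsilon}(\x,\y),
\end{equation*}
so with the choice $\eta(\x,\y) := \tfrac{1}{2}\|\x-\y\|^2$ the quantity $d_S^{\epsilon}(\mP_n,\mQ_m)-d_S^{\epsilon}(\mP,\mQ)$ is exactly of the form $\int \eta\,(d\pi^{\epsilon}_{\mP_n,\mQ_m}-d\pi^{\epsilon}_{\mP,\mQ})$ studied in Corollary~\ref{Corollary:TCLcouplings}.

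First I would check the integrability hypothesis required by Corollary~\ref{Corollary:TCLcouplings}, namely $\eta\in L^2(\mP\times\mQ)$. Since $\Omega\subset\R^d$ is compact, $\operatorname{diam}(\Omega)<\infty$, and hence
\begin{equation*}
    \|\eta\|_{\infty} = \sup_{(\x,\y)\in \Omega\times \Omega} \tfrac{1}{2}\|\x-\y\|^2 \le \tfrac{1}{2}\operatorname{diam}(\Omega)^2 < \infty,
\end{equation*}
so in particular $\eta\in L^{\infty}(\mP\times\mQ)\subset L^2(\mP\times\mQ)$. Then I would apply Corollary~\ref{Corollary:TCLcouplings} with this $\eta$ in both the two-sample regime ($m=m(n)\to\infty$, $m/(n+m)\to\lambda\in(0,1)$) and the one-sample regime, obtaining respectively convergence to $N(0,\sigma^2_{\lambda,\epsilon}(\eta))$ at rate $\sqrt{n m/(n+m)}$ and to $N(0,\sigma^2_{\mP}(\eta))$ at rate $\sqrt{n}$, where the limiting variances are precisely those displayed in Corollary~\ref{Corollary:TCLcouplings} evaluated at $\eta(\x,\y)=\tfrac{1}{2}\|\x-\y\|^2$.

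Since every analytical work (first-order linearization of the primal plan, treatment of potential fluctuations, control of the empirical process, and invertibility of $1-\AQ^\epsilon\AP^\epsilon$ and $1-\AP^\epsilon\AQ^\epsilon$) has already been absorbed into Corollary~\ref{Corollary:TCLcouplings}, there is no genuine obstacle here: the proof is a one-line verification of the $L^2$ hypothesis followed by an invocation of the previous corollary. The only substantive remark to add is that the variances $\sigma^2_{\lambda,\epsilon}(\tfrac{1}{2}\|\cdot-\cdot\|^2)$ and $\sigma^2_{\mP,\epsilon}(\tfrac{1}{2}\|\cdot-\cdot\|^2)$ are defined through the auxiliary functions $\eta^{\epsilon}_{\x}(\x)=\int\tfrac{1}{2}\|\x-\y\|^2\xi^{\epsilon}_{\mP,\mQ}(\x,\y)d\mQ(\y)$ and $\eta^{\epsilon}_{\y}(\y)=\int\tfrac{1}{2}\|\x-\y\|^2\xi^{\epsilon}_{\mP,\mQ}(\x,\y)d\mP(\x)$ introduced just before Theorem~\ref{Theorem:TCLcouplings}, so that the displayed limits are completely specified.
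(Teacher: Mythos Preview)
Your proposal is correct and matches the paper's approach exactly: the paper presents Corollary~\ref{Corollary:TCLcouplingsDiv} as an immediate consequence of Corollary~\ref{Corollary:TCLcouplings} applied to the bounded (hence $L^2(\mP\times\mQ)$) function $\eta(\x,\y)=\tfrac{1}{2}\|\x-\y\|^2$, with no additional argument given.
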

Another interesting application of Corollary~\ref{Corollary:TCLcouplings} is to the function $(\x,\y)\mapsto \mathbbm{1}_{\|\x-\y\|^2\leq t}$, for $t\geq 0$. \cite{KlaTamMun20} computed the regularized optimal transport problem to match two
protein intensity distributions and defined the regularized
colocalization measure RCol
\begin{equation*}
     \operatorname{RCol}(\pi_{\mP,\mQ} ,t)=\int \mathbbm{1}_{\|\x-\y\|^2\leq t} d \pi_{\mP,\mQ} (\x, \y)=\pi_{\mP,\mQ} ( \|\cdot-\cdot\|^2\leq t),
\end{equation*}
which represents the mass of the pixel intensity 
 transported on scales smaller or equal to $t$. Theorem 7.1. in \cite{KlaTamMun20} gives confidence intervals for the discretized images (finite number of pixels). The following result extend it to general probability distributions representing the pixels.
\begin{Corollary}\label{Corollary:TCLcouplingsKlatt}
Let $\Omega\subset{\R^d}$ be a compact set, ${\rm P}, {\rm Q}\in \mathcal{P}(\Omega)$ and ${\rm P}_n$ (resp. $\mQ_m$) be the empirical measure of the i.i.d. sample $\X_1,\dots,\X_n$ (resp. $\Y_1,\dots,\Y_m$) distributed as $\rm P$ (resp. $\mQ$). Then, if $m=m(n)\to \infty$, $\frac{m}{n+m}\to \lambda\in (0,1)$,
\begin{equation*}
    \sqrt{\frac{n\, m}{n+m}}\left(  \operatorname{RCol}(\pi_{\mP_n,\mQ_m} ,t)-  \operatorname{RCol}(\pi_{\mP,\mQ} ,t)\right)\longrightarrow N\left(0,\sigma^2_{\lambda, \epsilon}\left( \mathbbm{1}_{\|\cdot-\cdot\|^2\leq t}\right)\right),\ \ \text{weakly},
\end{equation*}
where the variance $\sigma^2_{\lambda,\epsilon}(\mathbbm{1}_{\|\cdot-\cdot\|^2\leq t})$ is defined in Corollary~\ref{Corollary:TCLcouplings}  for the function $(\x,\y)\mapsto \mathbbm{1}_{\|\x-\y\|^2\leq t}$.
Moreover, in the one-sample case we have
\begin{equation*}
    \sqrt{n}\left(  \operatorname{RCol}(\pi_{\mP_n,\mQ} ,t)-  \operatorname{RCol}(\pi_{\mP,\mQ} ,t) \right)\longrightarrow N\left(0,\sigma^2_{\mP, \epsilon}\left(\mathbbm{1}_{\|\cdot-\cdot\|^2\leq t}\right)\right),\ \ \text{weakly},
\end{equation*}
with $\sigma^2_{\mP, \epsilon}\left(\mathbbm{1}_{\|\cdot-\cdot\|^2\leq t}\right)$ as in Corollary~\ref{Corollary:TCLcouplings} 
\end{Corollary}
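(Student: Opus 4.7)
The plan is to apply Corollary~\ref{Corollary:TCLcouplings} directly with the choice $\eta(\x,\y) = \mathbbm{1}_{\|\x-\y\|^2 \leq t}$, so that by construction
\[
\int \eta\, d\pi_{\mP,\mQ}^{\epsilon} = \pi_{\mP,\mQ}^{\epsilon}(\|\x-\y\|^2 \leq t) = \operatorname{RCol}(\pi_{\mP,\mQ}^{\epsilon},t),
\]
and analogously for the empirical coupling $\pi_{\mP_n,\mQ_m}^{\epsilon}$. The only hypothesis to verify in Corollary~\ref{Corollary:TCLcouplings} is that $\eta \in L^2(\mP \times \mQ)$, which is immediate: $\eta$ is bounded by $1$ and $\mP \times \mQ$ is a probability measure, so $\|\eta\|_{L^2(\mP\times\mQ)} \leq 1$.

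Applying the two-sample assertion of Corollary~\ref{Corollary:TCLcouplings} then yields
\[
\sqrt{\tfrac{n\,m}{n+m}}\bigl(\operatorname{RCol}(\pi_{\mP_n,\mQ_m}^{\epsilon},t) - \operatorname{RCol}(\pi_{\mP,\mQ}^{\epsilon},t)\bigr) \longrightarrow N\!\left(0,\sigma^2_{\lambda,\epsilon}(\mathbbm{1}_{\|\cdot-\cdot\|^2\leq t})\right),
\]
with variance given by the formula in Corollary~\ref{Corollary:TCLcouplings} evaluated at $\eta = \mathbbm{1}_{\|\cdot-\cdot\|^2\leq t}$. Here the auxiliary functions $\eta_\x^{\epsilon}(\x) = \int \mathbbm{1}_{\|\x-\y\|^2\leq t}\xi_{\mP,\mQ}^{\epsilon}(\x,\y)d\mQ(\y)$ and $\eta_\y^{\epsilon}(\y) = \int \mathbbm{1}_{\|\x-\y\|^2\leq t}\xi_{\mP,\mQ}^{\epsilon}(\x,\y)d\mP(\x)$ are bounded measurable functions, so the compositions with the smoothing operators $\AP^{\epsilon}, \AQ^{\epsilon}$ and the inverses $(1-\AQ^{\epsilon}\AP^{\epsilon})^{-1}, (1-\AP^{\epsilon}\AQ^{\epsilon})^{-1}$ are well defined. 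The one-sample statement follows identically from the second part of Corollary~\ref{Corollary:TCLcouplings}.

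There is essentially no obstacle in the proof; the content of this corollary lies not in its derivation but in its scope. Indeed, Theorem~7.1 in \cite{KlaTamMun20} establishes the analogous central limit theorem only in the finite-pixel setting, where $\mP$ and $\mQ$ are supported on a fixed discrete grid and the limit distribution is obtained by finite-dimensional delta-method arguments. The bounded indicator $\mathbbm{1}_{\|\cdot-\cdot\|^2\leq t}$ lies in $L^2(\mP\times\mQ)$ for any pair of compactly supported probabilities, so Corollary~\ref{Corollary:TCLcouplings} furnishes the extension to arbitrary $\mP,\mQ \in \mathcal{P}(\Omega)$, which is the content being recorded here.
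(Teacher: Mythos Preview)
Your proposal is correct and matches the paper's approach exactly: the paper presents Corollary~\ref{Corollary:TCLcouplingsKlatt} as an immediate application of Corollary~\ref{Corollary:TCLcouplings} to the bounded test function $(\x,\y)\mapsto\mathbbm{1}_{\|\x-\y\|^2\leq t}$ and gives no separate proof. Your verification that $\eta\in L^2(\mP\times\mQ)$ and your remark on the relation to \cite{KlaTamMun20} are precisely the points the paper makes in the surrounding discussion.
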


\subsection{Proof of Theorem~\ref{Theorem:TCLcouplings}}

Recall that $1-\AQ\AP$ and $1-\AP\AQ$ are self adjoint due to 
the following relation, which is consequence of Fubini's theorem,
\begin{equation}
    \label{conjugate}
   \langle  f,  \AQ g\rangle_{L^2(\mP)}=\langle  \AP f, g \rangle_{L^2( \mQ )}, \quad \text{for all $f\in L^2(\mP), g\in L^2(\mQ)$}.
\end{equation}

 For the sake of readability, we adopt the notation \eqref{notationForProofs}.
We define first the 
bilinear form $ \mathbb{B}_{\C} $ as 
\begin{align*}
    \mathbb{B}_{\C}: \mathcal{C}^{1}(\Omega^2)  \times  (\mathcal{C}^{1}(\Omega^2))'  &\to \R \\
    (h, \nu)& \mapsto   \nu( h  \C ), 
\end{align*} 
which for measures $\nu\in (\mathcal{C}(\Omega))'$ takes the form 
$ \mathbb{B}_{\C}(h, \nu)=\int h(\x, \y) \C(\x, \y) d\nu(\x, \y). $ The following are the main properties of the form $\mathbb{B}_{\C}$. The proofs of \eqref{BoundBoperator} and \eqref{BoundOnfixedhB} are straightforward, while the proof of \eqref{BoundsOnEmpiricalNu} is direct consequence of \cite[Theorems 2.7.11 and 2.7.1 ]{Vart_Well}. 
\begin{Lemma}\label{Lemma:propB}
The following holds: 
\begin{itemize}
    \item (Continuity) For all $(h, \nu)\in \mathcal{C}^{1}(\Omega^2)  \times  (\mathcal{C}^{1}(\Omega^2))'$
      \begin{equation}
    \label{BoundBoperator}
    | \mathbb{B}_{\C}(h, \nu)|\leq \|h\C\|_{(\mathcal{C}^{1}(\Omega^2))} \|\nu\|_{(\mathcal{C}^{1}(\Omega^2))'}\leq C\|h\|_{(\mathcal{C}^{1}(\Omega^2))} \|\nu\|_{(\mathcal{C}^{1}(\Omega^2))'}.
\end{equation}
\item (Consistent empirical process) 
\begin{equation}
\label{BoundsOnEmpiricalNu}
\|\eta (\mQ_m\otimes (\mP_n-\mP))\|_{(\mathcal{C}^1(\Omega))'}, \  \|\eta ((\mQ_m-\mQ)\otimes \mP_n)\|_{(\mathcal{C}^1(\Omega))'}=o_\P(1).
\end{equation}
\item (Rate for fixed $ h\in \mathcal{C}^{1}(\Omega^2) $) 
\begin{equation}
    \label{BoundOnfixedhB}
    |\mathbb{B}_{\C}(h, \eta\, (\mQ_m-\mQ)\otimes (\mP_n-\mP)))|=\mathcal{O}_\P\left(\frac{n+ m}{n\, m}\right).
\end{equation}
\end{itemize}
\end{Lemma}
We also have the following ``rules of calculus'' of the operators $\AP$, $\AQ$ and ${\bf i}_{\xi_{\mP, \mQ}} $ in the space  
\begin{equation}
    \label{OPeratorS}
    \mathcal{S}=\overline{\{\nu\in (\mathcal{C}(\Omega))': \quad \nu(\Omega)=0\}}^{(\Cs)'}\subset (\Cs)'.
\end{equation}
Such a Banach space is convenient due to two facts; (i) the equality 
$$  \nu \AP^2 {\bf i}_{\xi_{\mP,\mP}}(\mu) = \langle   \AP {\bf i}_{\xi_{\mP,\mP}}(\mu),  {\bf i}_{\xi_{\mP,\mP}}(\nu) \rangle_{L^2(\mP)} $$
holds for $\mu, \nu\in \mathcal{S}$ due to Fubinni's theorem; (ii)
$$ \mQ \, {\bf i}_{\xi_{\mP,\mQ}}(\mu)=\int \xi_{\mP, \mQ}(\x, \y) d\mQ(\y) d \mu(\x)=0 , $$
so that $(1-\AP\AQ)^{-1}$ and $(1-\AQ\AP)^{-1}$ are well-defined in $ \mathcal{S}$; 
and the $\mP$-Gaussian bridge belongs to $\mathcal{S} $. 
 
\begin{Lemma}[Rules of Calculus] Set $ \nu\in  \mathcal{S}  $  and $f\in \Cs $. 
    It holds that
    \begin{align}
   &\AP(1-\AQ\AP)^{-1}=  (1-\AP\AQ)^{-1}\AP \label{ruleApAq1},  && \AQ(1-\AP\AQ)^{-1}=  (1-\AQ\AP)^{-1}\AQ ,\\
   &\langle f,{\bf i}_{\xi_{\mQ, \mP}}\nu\rangle_{L^2(\mP)}=\nu( \AP f),\quad {\rm and} &&\langle f,{\bf i}_{\xi_{\mP, \mQ}} \nu\rangle_{L^2(\mQ)}=\nu( \AQ f). \label{ruleApAq2}
\end{align}
\end{Lemma}
\begin{proof}
 \eqref{ruleApAq1} holds  due to
 \begin{align*}
      \AP&=(1-\AP\AQ)^{-1}(1-\AP\AQ)\AP\\
      &=(1-\AP\AQ)^{-1}(\AP-\AP\AQ\AP)\\
    &=(1-\AP\AQ)^{-1}\AP(1-\AQ\AP).
 \end{align*}
We prove  \eqref{ruleApAq2} for $ \nu\in  {(\mathcal{C}(\Omega))'} $. Fubini's theorem implies that 
\begin{align*}
    \langle f,{\bf i}_{\mQ, \mP}\nu\rangle_{L^2(\mP)}&=\int f(\x) \xi_{\mP, \mQ}(\x, \y) d(\mP\otimes \nu)(\x, \y)\\
    &=\int \left(\int f(\x) \xi_{\mP, \mQ} (\x, \y) d\mP(\x) \right) d\nu(\y),
\end{align*}
yielding the result. In the  general case $ \nu\in  \mathcal{S}  $ the same result holds due to the continuity of the operators and a standard density argument. 
\end{proof}
We call $ v_{n,m}^\mQ= \sqrt{\frac{n\, m}{n+m}} (\mQ_m-\mQ) $ and $ v_{n,m}^\mP= \sqrt{\frac{n\, m}{n+m}} (\mP_n-\mP) $.  
With this estimates in mind, 
we  split \eqref{thequqntity} in three terms: 
  $\int \eta\, (d\pi_{\mP_n,\mQ_m} -d\pi_{\mP,\mQ} )=A+B+C$, where each of them can be easily estimated via \eqref{BoundOnfixedhB}, \eqref{BoundsOnEmpiricalNu}, \eqref{BoundBoperator} and Lemma~\ref{Lemmainequqlity} as follows. First the term capturing the difference between the densities   
 \begin{align*}
     A&= \sqrt{\frac{n\, m}{n+m}}\int \eta(\x,\y)  \mathbb{C}(\x, \y) \left(e^{h_{n,m}(\x,\y)}-e^{h_*(\x,\y)} \right)d \mQ_m(\y) d\mP_n(\x)\\
     &=\sqrt{\frac{n\, m}{n+m}} \mathbb{B}_{\C}  \left(e^{h_{n,m}}-e^{h_*}, \eta (\mQ_m\otimes \mP_n)  \right)\\
     &=\sqrt{\frac{n\, m}{n+m}} \left(\mathbb{B}_{\C}  \left(e^{h_{n,m}}-e^{h_*}, \eta (\mQ\otimes \mP)  \right)+  \mathbb{B}_{\C}  \left(e^{h_{n,m}}-e^{h_*}, \eta (\mQ_m\otimes \mP_n-\mQ\otimes \mP)  \right)\right) \\
     &=\sqrt{\frac{n\, m}{n+m}} \mathbb{B}_{\C}  \left(e^{h_{n,m}}-e^{h_*}, \eta (\mQ\otimes \mP)  \right)+  o_\P(1).
 \end{align*}
Applying  Lemma~\ref{Lemma:Hadamard} and Lemma~\ref{Lemmainequqlity}  yields 
 $$A= \sqrt{\frac{n\, m}{n+m}} \mathbb{B}_{\C}  \left({h_{n,m}}-h_*, e^{h_*}\eta (\mQ\otimes \mP)  \right)+  o_\P(1). $$
Then the one dealing with the empirical process of $\mP$
\begin{align*}
    B
    & = \mathbb{B}_{\C}  \left(e^{h_*}, \eta (\mQ_m\otimes  v_{n,m}^\mP)  \right)\\
     \text{(by \eqref{BoundOnfixedhB}) } &= \mathbb{B}_{\C}  \left(e^{h_*}, \eta (\mQ\otimes  v_{n,m}^\mP )  \right)+o_\P(1)\\
     &=   ( v_{n,m}^\mP\otimes \mQ   )\left( \eta  \xi_{\mP, \mQ}   \right)+o_\P(1)\\
     &=    v_{n,m}^\mP( \eta_\x)+o_\P(1)
\end{align*}
and the last one with that of  $\mQ$, namely
$
    C_{m}= v_{n,m}^\mQ(\eta_{\y}).
$
In view of Theorem~\ref{Theorem:TCLpot},
we have 
\begin{multline}
    \label{limitA1}
    A=
\left\langle \eta,    (1-\AQ\AP)^{-1}\AQ {\bf i}_{\xi_{\mP, \mQ}}  v_{n,m}^\mP-(1-\AQ\AP)^{-1} {\bf i}_{ \xi_{\mQ, \mP}} v_{n,m}^\mQ\right\rangle_{L^2(\pi_{\mP,\mQ})}\\ + \left\langle \eta , \AP (1-\AQ\AP)^{-1} {\bf i}_{\xi_{\mQ, \mP}} v_{n,m}^\mQ -
 (1-\AP\AQ)^{-1} {\bf i}_{\xi_{\mP, \mQ}}  v_{n,m}^\mP\right\rangle_{L^2(\pi_{\mP,\mQ})}+o_\P(1).
\end{multline}
 In order  to introduce the term $\eta$ inside the operator, we observe that 
\begin{align*}
       \left\langle \eta,  (1-\AQ\AP)^{-1} {\bf i}_{ \xi_{\mQ, \mP}} v_{n,m}^\mQ\right\rangle_{L^2(\pi_{\mP,\mQ})}
       &=\langle \eta_{\x},(1-\AQ\AP)^{-1}{\bf i}_{ \xi_{\mQ, \mP}} v_{n,m}^\mQ\rangle_{L^2(\mP)}\\
 \text{(by \eqref{conjugate}) }      &=\langle (1-\AQ\AP)^{-1}\eta_{\x},{\bf i}_{ \xi_{\mQ, \mP}}v_{n,m}^\mQ\rangle_{L^2(\mP)}.
\end{align*}
 Each term of \eqref{limitA1} can be treated in the same way, which gives the relations;
\begin{align*}
      &\langle\eta,\AP (1-\AQ\AP)^{-1} {\bf i}_{ \xi_{\mQ, \mP}} v_{n,m}^\mQ\rangle_{L^2(\pi_{\mP,\mQ})}
       =\langle (1-\AQ\AP)^{-1}\AQ\eta_{\y},{\bf i}_{ \xi_{\mQ, \mP}} v_{n,m}^\mQ \rangle_{L^2(\mP)},\\
       &\langle \eta, (1-\AP\AQ)^{-1}{\bf i}_{ \xi_{\mQ, \mP}} v_{n,m}^\mP\rangle_{L^2(\pi_{\mP,\mQ})}
       =\langle (1-\AP\AQ)^{-1}\eta_{\y}, {\bf i}_{\xi_{\mP, \mQ}}  v_{n,m}^\mP \rangle_{L^2(\mQ)}\ \ {\rm and}\\
       &\langle \eta, (1-\AQ\AP)^{-1}\AQ  {\bf i}_{\xi_{\mP, \mQ}}  v_{n,m}^\mP\rangle_{L^2(\pi_{\mP,\mQ})}
       =\langle (1-\AP\AQ)^{-1}\AP\eta_{\x}, {\bf i}_{\xi_{\mP, \mQ}} v_{n,m}^\mP \rangle_{L^2(\mQ)}.
\end{align*}
Therefore, we obtain 
\begin{align*}
      A&= \langle (1-\AQ\AP)^{-1}(\AQ\eta_\y-\eta_{\x}),{\bf i}_{ \xi_{\mQ, \mP}}v_{n,m}^\mQ\rangle_{L^2(\mP)}\\
    &\qquad+ \langle (1-\AP\AQ)^{-1}(\AP\eta_{\x}-\eta_\y), {\bf i}_{\xi_{\mP, \mQ}} v_{n,m}^\mP \rangle_{L^2(\mQ)}\\
\text{(by \eqref{ruleApAq2}) }    &= v_{n,m}^\mQ \AP (1-\AQ\AP)^{-1}(\AQ\eta_\y-\eta_{\x})+ v_{n,m}^\mP \AQ (1-\AP\AQ)^{-1}(\AP\eta_{\x}-\eta_\y).
\end{align*}
 Adding the missing $B$ and $C$ terms and omitting  $o_\P(1)$ errors, we obtain that \eqref{thequqntity} is equal to 
\begin{align}\label{FirstandSecodn}
v_{n,m}^\mQ(\eta_\y+ \AP (1-\AQ\AP)^{-1}(\AQ\eta_\y-\eta_{\x}))+ v_{n,m}^\mP(\eta_\x+ \AQ (1-\AP\AQ)^{-1}(\AP\eta_{\x}-\eta_\y)). 
\end{align}
 Focusing  on the first term of \eqref{FirstandSecodn}, we compute 
\begin{align*} v_{n,m}^\mQ(\eta_\y+& \AP (1-\AQ\AP)^{-1}(\AQ\eta_\y-\eta_{\x}))\\
\text{(by \eqref{ruleApAq2}) } &=v_{n,m}^\mQ(\eta_\y+   (1-\AP\AQ)^{-1}\AP (\AQ\eta_\y-\eta_{\x}))
\\
& =v_{n,m}^\mQ(\eta_\y+   (1-\AP\AQ)^{-1} (\AP\AQ\eta_\y-\AP\eta_{\x}))\\
&=v_{n,m}^\mQ(\eta_\y+   (1-\AP\AQ)^{-1} (\eta_\y- \AP\eta_{\x}-(1-\AP\AQ)\eta_\y))\\
&=v_{n,m}^\mQ(  (1-\AP\AQ)^{-1} (\eta_\y- \AP\eta_{\x})).
\end{align*}
 Simplifying the second term of  \eqref{FirstandSecodn} in an identical fashion yields the result.

\section{Weak limit of the Divergences}\label{sec:divergences}

Recall that, for probabilities ${\rm P},{\rm Q}\in \mathcal{P}(\Omega)$, the quadratic Sinkhorn divergence \cite{GenPeyCut18} is defined as
$$D_{\epsilon}({\rm P},{\rm Q})= S_{\epsilon}({\rm P},{\rm Q})- \frac{1}{2}\left(  S_{\epsilon}({\rm P},{\rm P})+ S_{\epsilon}({\rm Q},{\rm Q}) \right).$$
 The correction terms in the definition of the Sinkhorn divergence are designed to debias the entropic optimal transport distance so that it becomes a \emph{bona fide} discrepancy measure (i.e., $D_{\epsilon}({\rm P},{\rm Q})=0$ if and only if ${\rm P}={\rm Q}$). 
This distance is of great interest for statistical applications.
 In this section, we study its limit distribution. 
Theorem~\ref{Theorem:TCLDiv} provides  limits of the quantity 
$a_n\left(D_{\epsilon}({\rm P}_n,{\rm Q})-D_{\epsilon}({\rm P}_n,{\rm Q})\right),$
where the sequence $\{a_n \}_{n\in \N}$ depends on the hypothesis  $H_0:\ {\rm P}={\rm Q}$ or $H_1:\ {\rm P} \neq  {\rm Q}$. In  the latter  case, the limit can be established by means of the \cite{delbarrio2019}'s technique based on Efron-Stein inequality---see also \cite{Weed19,delbarrio2021central} for reproduction and improvement of this argument---with common rate $a_n=\sqrt{n}$. Since the proof under $H_1$ does not have any technical inovation, we omit the details. 
The case ${\rm P}={\rm Q}$, however, has the faster rate $a_n=n$ and the limit depends on the $\mP$-Brownian bridge $\mathbb{G}_{\mP}$ in $(\mathcal{C}^{s}(\Omega))'$. 
More precisely, the limit in the one sample case will be the action of the random operator $\mathbb{G}_{\mP}$  on  the random function  $(1-\AP^2)^{-1}  {\bf i}_{\xi_{\mP,\mP}}(\mathbb{G}_{\mP})$.  
 To state our result, we define the bilinear form 
\begin{align*}
\mathbb{M}:\mathcal{S}\times \mathcal{S} &\to \R\\ 
      (\nu, \mu) &\mapsto  \nu((1-\AP^2)^{-1}  {\bf i}_{\xi_{\mP,\mP}}(\mu)),
\end{align*}
where $\mathcal{S}$ is defined in \eqref{OPeratorS}. 
$\mathbb{M}$ can be seen as the quadratic approximation of the Sinkhorn divergence under $H_0$. 
\begin{Theorem}\label{Theorem:TCLDivG}
Let $\Omega\subset{\R^d}$ be a compact set, ${\rm P}, {\rm Q}\in \mathcal{P}(\Omega)$ and ${\rm P}_n$ (resp. $\mQ_m$) be the empirical measure of the i.i.d. sample $\X_1,\dots,\X_n$ (resp. $\Y_1,\dots,\Y_m$) distributed as $\rm P$ (resp. $\mQ$). Then, if $m=m(n)\to \infty$ and $\frac{m}{n+m}\to \lambda\in (0,1)$, we have the following limits.
\begin{itemize}
\item Under $H_0:\ {\rm P} = {\rm Q},$
$$  D_\epsilon(\mP_n,\mQ_m)=\frac{\epsilon}{2}\, \mathbb{M}(\mP_n-\mQ_m, \mP_n-\mQ_m) + o_\P\left( \frac{n+m}{n\, m}\right)$$
and 
$  D_\epsilon(\mP_n,\mP)=\frac{\epsilon}{2}\, \mathbb{M}(\mP_n-\mP, \mP_n-\mP) + o_\P\left( \frac{1}{n}\right)$.
\item Under $H_1:\ {\rm P} \neq  {\rm Q},$
\begin{multline*}
     \sqrt{\frac{n\,m}{n+m}}(D_{\epsilon}(\mP_n,\mQ_m)- D_{\epsilon}(\mP,\mQ))= \int \psi_{\mP,\mQ}  d(\mP_n-\mP) + \int \psi_{\mQ,\mP}  d(\mQ_m-\mQ)\\+o_\P\left(\sqrt{\frac{n+m}{n\, m}} \right),
\end{multline*}
and $
    \sqrt{n}(D_{\epsilon}(\mP_n,\mQ)- D_{\epsilon}(\mP,\mQ))=\int \psi_{\mP,\mQ}  d(\mP_n-\mP) +o_\P\left(n^{-\frac{1}{2}} \right),
$
where $\psi_{\mP,\mQ} =f_{\mP,\mQ} - \frac{1}{2}(f_{\mP,\mP} +g_{\mP,\mP} ) $ and $\psi_{\mQ,\mP} =f_{\mQ,\mP} - \frac{1}{2}(f_{\mQ,\mQ} +g_{\mQ,\mQ} ) $.
\end{itemize}
\end{Theorem}

Using Theorem~\ref{Theorem:TCLDivG} to prove that $n\, D_\epsilon(\mP_n,\mP'_m)\xrightarrow{w}\mathbb{M}(\mathbb{G}_{\mP}, \mathbb{G}_{\mP})$ is trivial due to the continuity of all of the operators involving $\mathbb{M}$.   However, characterizing  in terms of known random variables is not immediate. We would like to use the following result. 
\begin{Lemma}[{\citealp[Theorem 6.32]{Grenander1968}}]\label{GaussianDecomp}
    A Gaussian random variable $\mathbb{G}$ in a separable Hilbert space $\mathcal{H}$ can be represented as the a.s. limit as $k\to \infty$ in $\mathcal{H}$ of the sum $ \sum_{i=1}^{k} \lambda_i N_i e_i$, where $\{N_i\}_{i=1}^k$ is an i.d.d. sequence of $N(0,1)$ and $\{e_i\}_{i=1}^k$ an orthonormal basis of $\mathcal{H}$. Therefore, $\|\mathbb{G}\|_{\mathcal{H}}^2=\sum_{i=1}^\infty \lambda_i^2 \chi(1)_i  $ where $\{ \chi(1)_i\}_{i\in \N}$ is an i.d.d sequence of Chi-squares with one degree of freedom. 
\end{Lemma}
   However, $(\mathcal{C}^{s}(\Omega))'$ is not Hilbertian or separable, and this is not immediately deduced. To write everything in terms of a inner product in a Hilbert space we use the following result. 
\begin{Lemma}
The operator $\mathbb{M}$
    is bilinear, definite positive and bounded. 
    As a consequence, the space $ (\mathcal{H}_{\mathbb{M}}, \mathbb{M} )  $ is a separable Hilbert space, where $\mathcal{H}_{\mathbb{M}}=\overline{\mathcal{S}}^{\mathbb{M}}$.
\end{Lemma} 
\begin{proof}
The fact that $\mathbb{M}$ is bilinear is straightforward. It is also bounded due to the following:
\begin{equation}\label{eq:boundedM}
     \nu((1-\AP^2)^{-1}  {\bf i}_{\xi_{\mP,\mP}}(\mu))\leq \|\nu\|_{(C^s(\Omega))'} \|\mu\|_{(C^s(\Omega))'} \| (1-\AP^2)^{-1}\|_{\Cs\to \Cs } \| {\bf i}_{\xi_{\mP,\mP}}\|_{(\Cs)'\to \Cs } ,
\end{equation}
where $\|\cdot \|_{\mathcal{B}\to \mathcal{G} }$ denotes the operator norm between the Banach spaces $\mathcal{B}$ and $ \mathcal{G}$. Note that $\| (1-\AP^2)^{-1}\|_{\Cs\to \Cs } \| {\bf i}_{\xi_{\mP,\mP}}\|_{(\Cs)'\to \Cs } $ is bounded due to Lemmas~\ref{Lemma:equivalent} and \ref{Lemma:ixi}. 
  We need to prove now that $\mathbb{M}$ is symmetric and positive for measures $\mu $ and $ \nu$. The complete statement holds by a density argument. First note that
  \begin{equation}
      \label{liberarIdent}
      (1-\AP^2 )^{-1} =1+\AP^2 (1-\AP^2)^{-1}.
  \end{equation}
The spectrum of the self-adjoint compact operator $\AP$ in  the Hilbert space 
 $ L^2_0(\mP)$
 is contained in $[0, 1-\delta]$ for some $\delta>0$  (see Lemma~\ref{Lemma:equivalent}). Since, self-adjoint compact operators in Hilbert spaces attain its norm  \cite[Proposition 6.9]{Brezis}, we have $\left\|\AP\right\|_{L^2_0(\mP)\to L^2_0(\mP)}<1$. Hence, 
  \begin{equation}
      \label{Noiman}
     \lim_{k\to \infty} \left\|(1-\AP^2 )^{-1}-\sum_{i=0}^{k}\AP^{2k} \right\|_{L^2_0(\mP)\to L^2_0(\mP)}=0.
 \end{equation} 
Set $f\in \mathcal{C}(\Omega)$.  By Fubinni's theorem, it holds  
   \begin{align*}
      \nu \AP^2 ( f)
      &= \int \xi_{\mP,\mP} ( {\bf u}, {\bf z}) \xi_{\mP,\mP} ({\bf z}, \y) f(\y) d\mP({\y}) \mP({\bf z}) d\nu({\bf u}) \\
      &= \langle   \AP f ,  {\bf i}_{\xi_{\mP,\mP}}(\nu) \rangle_{L^2(\mP)}. 
  \end{align*}
  By using \eqref{liberarIdent},  \eqref{Noiman} and \eqref{conjugate} we get
  \begin{align*}
      \mathbb{M}(\mu, \nu)&= \int \xi_{\mP, \mP} d(\mu \otimes \nu)+ \sum_{i=0}^{\infty} \langle   \AP^{2k+1} {\bf i}_{\xi_{\mP,\mP}}(\mu) ,  {\bf i}_{\xi_{\mP,\mP}}(\nu) \rangle_{L^2(\mP)} \\
      &=\int \xi_{\mP, \mP} d(\nu \otimes \mu)+ \sum_{i=0}^{\infty} \langle   \AP^{2k+1} {\bf i}_{\xi_{\mP,\mP}}(\nu) ,  {\bf i}_{\xi_{\mP,\mP}}(\mu) \rangle_{L^2(\mP)},
  \end{align*}
  from where we deduce the symmetry. 
Since, 
$ \xi_{\mP,\mP} ({\bf z}, \y) = \xi_{\mP,\mP} ({\bf y}, {\bf z})=e^{f_*(\y)} \mathbb{C}({\bf z}, \y) e^{f_*({\bf z})}$ and $\mathbb{C}$ is the  Gaussian kernel with bandwidth $\epsilon$,  the bilinear form $\mathbb{M}$ is positive.  

To prove the last claim we only need to show that $\mathcal{H}_{\mathbb{M}}$ is separable.  First we notice that \eqref{eq:boundedM} implies 
$\mathcal{H}_{\mathbb{M}}= \overline{\{\nu\in (\mathcal{C}(\Omega))': \quad \nu(\Omega)=0\}}^{\mathbb{M}}. $ Recall that, by the uniform boundedness principle, the sequence $\{d_n\}_{n\in \N}$ is bounded in $(\mathcal{C}(\Omega))'$. 
Since the weak$^*$ topology of $\mathcal{C}(\Omega))'$ is separable, 
we can find a countable dense $D\subset  \{\nu\in (\mathcal{C}(\Omega))': \quad \nu(\Omega)=0\}$ such that for any  $\mu\in  (\mathcal{C}(\Omega))'$ with $\mu(\Omega)=0$ there exists $d_n \xrightarrow{*} \mu$, with $d_n\in D$ for all $n$. Here the convergence $\xrightarrow{*} $ is the weak$^*$ in $(\mathcal{C}(\Omega))'$. Since the operator ${\bf i}_{\xi_{\mP, \mP}}$ is compact  (apply the Arzelà--Ascoli theorem to Lemma~\ref{Lemma:ixi}) and $(1-\AP^2)^{-1}$ is bounded, then 
$ (1-\AP^2)^{-1} {\bf i}_{\xi_{\mP, \mP}}(d_n-\mu)\xrightarrow{\Cso} 0 $. Therefore, $\mathbb{M}(d_n-\mu, d_n-\mu )\to 0$, which shows that $D$ is also dense in $\mathcal{H}_{\mathbb{M}}$. 
\end{proof}
    We can now use Lemma~\ref{GaussianDecomp} to get the following representation. 
\begin{Theorem}\label{Theorem:TCLDiv}
Let $\Omega\subset{\R^d}$ be a compact set, ${\rm P}\in \mathcal{P}(\Omega)$,  ${\rm P}_n$ and  $\mP_m'$ be independent empirical measures of  $\rm P$. Then, if $m=m(n)\to \infty$ and $\frac{m}{n+m}\to \lambda\in (0,1)$, we have the following limits:
   \begin{equation*}
         n\,D_1({\rm P}_n,{\rm P})\overset{w}{\longrightarrow} \frac{\epsilon}{2}\sum_{i=1}^{\infty}\lambda_i^2 N_j^2
   \end{equation*}
   and 
 $$
         {\frac{n\,m}{n+m}}D_{\epsilon}(\mP_n,\mP_m' )\xrightarrow{w}  \frac{\epsilon}{2}\sum_{i=1}^{\infty}\lambda_i^2 N_i^2,
 $$
where $ \{N_i\}_{i\in \N}$ is a sequence of i.i.d. random variables with $N_i\sim  N(0,1)$ and   $\{\lambda_{i}\}_{i\in \N}\subset [0, \infty)$ is such that $\sum_{i=1}^\infty\lambda_{i}^2< \infty. $ More precisely,  $\{\lambda_{i}\}_{i\in \N}\subset [0, \infty)$  are the eigenvalues of the covariance operator of $\mathbb{G}_P$ in the separable Hilbert space $\mathcal{H}_{\mathbb{M}}$. 
\end{Theorem}

\begin{proof}[{Proof of Theorem~\ref{Theorem:TCLDivG} if ${\rm P} = {\rm Q}$}]
As usually, we prove it in  the two-sample case. We denote $\mP'_m=\mQ_m=\frac{1}{m}\sum_{k=1}^m\delta_{\X'_k}$. We want to derive the limit of
\begin{align*}
     D_\epsilon({\rm P}_n,{\rm P}'_m)&= S_\epsilon({\rm P}_n,{\rm P}'_m)-{\textstyle \frac{1}{2}}\left(  S_\epsilon({\rm P}_n,{\rm P}_n)+ S_\epsilon({\rm P}'_m,{\rm P}'_m) \right)\\
   &={\textstyle \frac{1}{2}}\left(S_\epsilon({\rm P}_n,{\rm P}'_m)-S_\epsilon({\rm P}_n,{\rm P}_n)\right)+{\textstyle \frac{1}{2}}\left(  S_\epsilon({\rm P}_n,{\rm P}'_m)- S_\epsilon({\rm P}'_m,{\rm P}'_m) \right) .
\end{align*}
The following result writes the previous display in a way that we can apply Theorem~\ref{Theorem:TCLpot}. 

\begin{Lemma}\label{Lemma:TechnicalDiv}
    It holds that 
    \begin{multline}\label{developmentDivSeparateterms}
  D_\epsilon(\mP_n,\mP'_m)=\frac{1}{2}\int (g_{\mP_n,\mP_n}- g_{\mP'_m,\mP'_m})(d\mP'_m-d\mP'_n) \\
    +\frac{1}{4 \epsilon}\int ((h_{\mP_n,\mP'_m}-h_{\mP'_m,\mP'_m})^2 + (h_{\mP_n,\mP'_m}-h_{\mP_n,\mP_n})^2) d\pi_{\mP,\mP} + o_\P\left(\frac{n+m}{n\,m}\right).
\end{multline}
\end{Lemma}
There are two important terms in \eqref{developmentDivSeparateterms}; we linearize both of them of separately. \\

{\it First term of \eqref{developmentDivSeparateterms}:}
In view of Theorem~\ref{Theorem:TCLpot}, $\frac{(g_{\mP_n,\mP_n}-g_{\mP'_m,\mP'_m})}{\epsilon}$ can be expressed, up to additive $o_{\P}\left(\sqrt{\frac{n+ m}{n\,m}}\right)$ terms, in $\mathcal{C}^s(\Omega)$ as 
\begin{align*}
      \big((1-\AP^2)^{-1} {\bf i}_{\xi_{\mP,  \mP }} ( \mP_n-\mP'_m)&- (1-\AP^2)^{-1}\AP  {\bf i}_{\xi_{\mP,  \mP }} ( \mP_n-\mP'_m)\big)\\
    &=((1-\AP^2)^{-1}-(1-\AP^2)^{-1}\AP){\bf i}_{\xi_{\mP, \mP }} ( \mP_n-\mP'_m)\\
    &=((1-\AP^2)^{-1}(1-\AP)){\bf i}_{\xi_{\mP,  \mP }} ( \mP_n-\mP'_m).
\end{align*}
These $o_{\P}\left(\sqrt{\frac{n+ m}{n\,m}}\right)$ error terms in $\mathcal{C}^s(\Omega)$ become $o_{\P}\left({\frac{n+ m}{n\,m}}\right)$ when $(\mP_n-\mP'_m)$ acts over them. Hence, 
\begin{multline}
    \label{difpot}
    (\mP_n-\mP'_m) \left( g_{\mP'_m,\mP'_m}-g_{\mP_n,\mP_n}\right)\\
    = \epsilon \, (\mP_n-\mP'_m)  (1-\AP^2)^{-1}(1-\AP)({\bf i}_{\xi_{\mP, \mP}} ( \mP_n-\mP'_m))+o_{\P}\left(\frac{n+ m}{n\,m}\right).
\end{multline}

{\it Second term of \eqref{developmentDivSeparateterms}: }
Since, up to $o_{\P}\left(\sqrt{\frac{n+m}{n\,m}}\right)$ terms, we have 
\begin{multline*}
\frac{h_{\mP_n,\mP'_m}(\x,\y)-h_{\mP'_m,\mP'_m}(\x,\y)}{\epsilon}= -\left((1-\AP^2)^{-1}\AP {\bf i}_{\xi_{\mP, \mP}} ( \mP_n-\mP'_m)\right)(\x)\\+ \left((1-\AP^2)^{-1} {\bf i}_{\xi_{\mP, \mP}} ( \mP_n-\mP'_m)\right)(\y),
\end{multline*}
so that
\begin{multline}
    \label{aftersquqres}
   (h_{\mP_n,\mP'_m}(\x,\y)-h_{\mP'_m,\mP'_m}(\x,\y))^2\\
  = \epsilon^2\left( \left((1-\AP^2)^{-1}\AP {\bf i}_{\xi_{\mP, \mP}} ( \mP_n-\mP'_m)\right)(\x) \right)^2+ \epsilon^2\left( \left((1-\AP^2)^{-1} {\bf i}_{\xi_{\mP, \mP}} ( \mP_n-\mP'_m))\right)(\y) \right)^2
  \\ -2\epsilon^2\left((1-\AP^2)^{-1}\AP {\bf i}_{\xi_{\mP, \mP}}  (\mP_n- \mP'_m)\right)(\x) \left((1-\AP^2)^{-1} {\bf i}_{\xi_{\mP, \mP}}  (\mP_n- \mP'_m)\right)(\y).
\end{multline}
Integrating  \eqref{aftersquqres}  with respect to  $\pi_{\mP,\mP}$ yields
\begin{multline*}
  \epsilon^2 \| (1-\AP^2)^{-1}\AP {\bf i}_{\xi_{\mP, \mP}} ( \mP_n-\mP'_m)\|^2_{L^2(\mP)}+ \epsilon^2\|  (1-\AP^2)^{-1} {\bf i}_{\xi_{\mP, \mP}} ( \mP_n-\mP'_m)\|^2_{L^2(\mP)}\\
   -2\epsilon^2\langle (1-\AP^2)^{-1}\AP {\bf i}_{\xi_{\mP, \mP}} ( \mP_n-\mP'_m), \AP  (1-\AP^2)^{-1} {\bf i}_{\xi_{\mP, \mP}} ( \mP_n-\mP'_m) \rangle_{L^2(\mP)},
\end{multline*}
which is equal to 
\begin{equation} \label{aftersquqresInt}
    \epsilon^2\|  (1-\AP^2)^{-1} {\bf i}_{\xi_{\mP, \mP}} ( \mP_n-\mP'_m)\|^2_{L^2(\mP)}-\epsilon^2 \| (1-\AP^2)^{-1}\AP  {\bf i}_{\xi_{\mP, \mP}} ( \mP_n-\mP'_m)\|^2_{L^2(\mP)},
\end{equation}
    where we use that all the operators commute. 
 Expanding the squares and using commutativity again, we obtain 
\begin{multline*}
 \| (1-\AP^2)^{-1}\AP  {\bf i}_{\xi_{\mP, \mP}} ( \mP_n-\mP'_m)\|^2_{L^2(\mP)}\\
        =  \langle (1-\AP^2)^{-2}\AP^2  {\bf i}_{\xi_{\mP, \mP}} ( \mP_n-\mP'_m),{\bf i}_{\xi_{\mP, \mP}} ( \mP_n-\mP'_m)\ER \rangle_{L^2(\mP)} 
\end{multline*}
and 
\begin{multline*}
 \| (1-\AP^2)^{-1}  {\bf i}_{\xi_{\mP, \mP}} ( \mP_n-\mP'_m)\|^2_{L^2(\mP)}\\=  \langle (1-\AP^2)^{-2}{\bf i}_{\xi_{\mP, \mP}} ( \mP_n-\mP'_m),{\bf i}_{\xi_{\mP, \mP}} ( \mP_n-\mP'_m) \rangle_{L^2(\mP)}.
\end{multline*}
As a consequence, \eqref{aftersquqresInt} can be rewritten as
\begin{align}
\begin{split}
    \label{aftersquqresInt2}
    \epsilon^2 \langle &(1-\AP^2)^{-2}(1-\AP^2){\bf i}_{\xi_{\mP, \mP}} ( \mP_n-\mP'_m),{\bf i}_{\xi_{\mP, \mP}} ( \mP_n-\mP'_m) \rangle_{L^2(\mP)}\\
    &= \epsilon^2 \langle (1-\AP^2)^{-1}{\bf i}_{\xi_{\mP, \mP}} ( \mP_n-\mP'_m),{\bf i}_{\xi_{\mP, \mP}} ( \mP_n-\mP'_m) \rangle_{L^2(\mP)},\\
  \text{(by \eqref{ruleApAq2})}  &= \epsilon^2  (\mP_n-\mP'_m) [\AP(1-\AP^2)^{-1} {\bf i}_{\xi_{\mP, \mP}} ( \mP_n-\mP'_m)],
\end{split}  
\end{align}
which turns out to be more convenient for the sequel. \\

{\it Derivation of the limit.} From Lemma~\ref{Lemma:TechnicalDiv} and equations  \eqref{difpot}  and \eqref{aftersquqresInt2}, we get (up to $o_{\P}\left(\frac{n+ m}{n\,m}\right)$ additive terms)
\begin{align*}
    D_\epsilon(\mP_n,\mP'_m)&= \frac{\epsilon}{2}  (\mP_n-\mP'_m)[  ((1-\AP^2)^{-1}{\bf i}_{\xi_{\mP, \mP}} ( \mP_n-\mP'_m)],
\end{align*}
which proves Theorem~\ref{Theorem:TCLDivG} under $H_0$. 
\end{proof}

\begin{funding}
    The research of Alberto González-Sanz and Jean-Michel Loubes is partially supported by the AI Interdisciplinary Institute ANITI, which is funded by
the French “Investing for the Future – PIA3” program under the Grant agreement ANR-19-PI3A-0004.  The research of  Jonathan Niles-Weed is partially funded by National Science Foundation, Grant DMS-2015291.
\end{funding}
\section*{Acknowledgements}
 Alberto González Sanz would like to thank Jonathan Niles-Weed for his hospitality at the Center for Data Science, New York University, where this research was concluded. The authors would like to thank Eustasio del Barrio for  commenting that his method based on the Efron-Stein inequality could work for the Sinkhorn Divergence under the alternative hypothesis and  Alex Kokot for pointing out some errors in the calculations on the earlier version of this work.  
 Alberto González Sanz would like to thank Gilles Mordant for helpful comments on the last version of this paper.

\appendix
\section{Auxiliary results of H\"older spaces}
\begin{Lemma}\label{Lemma:Hadamard}
Let $\alpha\in \N$, $I\subset\R$ be a compact interval, $F\in \mathcal{C}^{\infty}(I)$ and $g\in \mathcal{C}^{\alpha}(\Omega)$, with $g(\Omega)\subset I$. Then the operator
\begin{align*}
    \delta_F:\mathcal{C}^{\alpha}(\Omega)&\longrightarrow  \mathcal{C}^{\alpha}(\Omega)\\
    g&\longmapsto F(g),
\end{align*}
is Fr\'echet differentiable in $g$ with derivative
$ D \delta_F(g)h=F'(g)h.$ Moreover, setting $f\in \mathcal{C}^{\alpha}(\Omega)$, the operator 
\begin{align*}
    \mathcal{C}^{\alpha}(\Omega)&\longrightarrow  \mathcal{C}^{\alpha}(\Omega)\\
    g&\longmapsto f g,
\end{align*}
is Fr\'echet differentiable in $g$ with derivative
$h\to f h.$

\end{Lemma}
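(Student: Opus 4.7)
The plan is to treat the multiplication operator first and reduce the composition operator to it via Taylor's theorem. For the multiplication map $M_f : g \mapsto fg$, linearity is immediate, and the Banach algebra property of $(\mathcal{C}^{\alpha}(\Omega), \|\cdot\|_{\alpha})$ (cf.~Lemma~\ref{BanachAlgebra}) yields a constant $C$ with $\|fg\|_{\alpha} \leq C\|f\|_{\alpha}\|g\|_{\alpha}$, so $M_f$ is bounded linear. Any bounded linear operator between Banach spaces is Fréchet differentiable at every point with derivative equal to itself, whence $D M_f(g)(h) = fh$.

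For the composition operator $\delta_F$, fix $g$ and restrict to $h$ with $\|h\|_{\alpha}$ small enough that $g + \theta h$ still takes values in $I$ for every $\theta \in [0,1]$ (after enlarging $I$ slightly if necessary, which does not affect the conclusion). Pointwise Taylor's theorem with integral remainder gives
\begin{equation*}
F(g(x)+h(x)) - F(g(x)) - F'(g(x))\,h(x) = h(x)^2\,\phi_h(x), \qquad \phi_h(x) := \int_0^1 (1-t)\,F''(g(x)+t\,h(x))\, dt.
\end{equation*}
Call the left-hand side $R_h$. By the Banach algebra estimate, $\|R_h\|_{\alpha} \leq C\,\|h\|_{\alpha}^2\,\|\phi_h\|_{\alpha}$, so Fréchet differentiability with derivative $h \mapsto F'(g)h$---a linear map, continuous by the multiplication part applied to $F'(g) \in \mathcal{C}^{\alpha}(\Omega)$---will follow once $\|\phi_h\|_{\alpha}$ is shown to stay bounded as $\|h\|_{\alpha} \to 0$.

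The crux is this uniform bound on $\|\phi_h\|_{\alpha}$. Exchanging $t$-integration with $x$-differentiation (justified by uniform continuity of the integrand), every partial derivative $D^{\beta}\phi_h(x)$ with $|\beta| \leq \alpha$ becomes the integral over $t \in [0,1]$ of an expression that, by Faà di Bruno's formula, is a polynomial in the values $F^{(j)}(g(x) + t\,h(x))$ for $2 \leq j \leq |\beta|+2$, with coefficients consisting of products of partial derivatives of $g + t h$ of order at most $|\beta|$. Since $F \in \mathcal{C}^{\infty}(I)$, the $F^{(j)}$ are uniformly bounded on the compact range of $g + t h$; the derivative factors are bounded by $(\|g\|_{\alpha} + \|h\|_{\alpha})^{|\beta|}$. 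Integrating in $t$ yields $\|\phi_h\|_{\alpha} = O(1)$, so $\|R_h\|_{\alpha} = O(\|h\|_{\alpha}^2) = o(\|h\|_{\alpha})$, completing the argument. The main obstacle is merely the combinatorial bookkeeping of Faà di Bruno; no analytic ingredient beyond the Banach algebra structure and the smoothness of $F$ is required.
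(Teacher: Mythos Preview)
Your proof is correct and takes a genuinely different route from the paper's. The paper argues by induction on the order of the partial derivative: for the base case it applies Taylor's theorem pointwise to control the uniform norm, and for the inductive step it writes $D_b = D_c D_d$ with $|d|=1$, applies the chain rule to obtain
\[
D_b\bigl(F(g+h)-F(g)-F'(g)h\bigr)=D_c\Bigl(\bigl(F'(g+h)-F'(g)-F''(g)h\bigr)D_d g+\bigl(F'(g+h)-F'(g)\bigr)D_d h\Bigr),
\]
and then invokes the induction hypothesis for $F'$ (which again lies in $\mathcal C^\infty(I)$) together with Lemma~\ref{BanachAlgebra}. Your argument instead isolates the remainder via the integral form of Taylor's theorem as $R_h=h^2\phi_h$, uses the Banach-algebra estimate once to get $\|R_h\|_\alpha\le C\|h\|_\alpha^2\|\phi_h\|_\alpha$, and bounds $\|\phi_h\|_\alpha$ directly through Fa\`a di Bruno applied to $F''\circ(g+th)$. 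The paper's scheme avoids the integral remainder and the explicit Fa\`a di Bruno combinatorics at the price of a slightly subtle induction (the hypothesis must be carried simultaneously over all smooth $F$); your scheme is more direct and even yields the sharper quantitative bound $\|R_h\|_\alpha=O(\|h\|_\alpha^2)$, at the cost of justifying the interchange of differentiation and $t$-integration and handling the enlargement of $I$ needed to keep $g+th$ in the domain. Both rely only on $F\in\mathcal C^\infty(I)$ and the algebra property of $\mathcal C^\alpha(\Omega)$.
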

\begin{proof}
To  verify  the first claim, i.e. 
$$\|F(g+h)-F(g)- F'(g)h\|_{\alpha}=o(\|h \|_{\alpha}),$$
we prove the bound for each derivative in a direction $ v  $ by a recursive argument on $ a = |v|$. Note that the first case, the uniform norm, is trivial; set $\x\in \Omega$ and apply Taylor’s theorem to obtain
$$  \rvert F(g(\x)+h(\x))-F(g(\x))- F'(g(\x))h(\x) \rvert \leq  \rvert F''(g(\x))h(\x)^2  \rvert \leq \sup_{I} \rvert F'' \rvert  \cdot  \| h\|_s. $$
Suppose that the result holds for $a\in \N$.
 Given a  derivative in a direction $b$ with $ \rvert b \rvert =a+1$,  there exists a decomposition $ D_{b}=D_{c}D_d$ with $ \rvert d \rvert =1$ and $ \rvert c \rvert =a$. Therefore, the chain rule yields
\begin{align*}
    &D_{b}(F(g+h)-F(g)- F'(g)h)\\
    &=D_{c}(F'(g+h)(D_d g+D_d h)-F'(g))D_d g- F''(g)D_d g h-F'(g) D_g h\\
    &=D_c\left((F'(g+h)-F'(g) -F''(g)h)D_d g+ (F'(g+h)-F'(g))D_d h\right).
\end{align*}
Since the function $F'$ still satisfies the assumptions of the theorem we obtain, by induction hypothesis and Lemma~\ref{BanachAlgebra}, the limits 
\begin{multline*}
     \| D_{c}((F'(g+h)-F'(g) -F''(g)h)D_d g)\|_{\infty}\\
     \leq C  \| D_{c}(F'(g+h)(D_d g+D_d h)-F'(g))\|_{\infty}\| g\|_{s}=o(\|h\|_s)
\end{multline*}
and   
$$
     \| D_{c}((F'(g+h)-F'(g))D_d h)\|_{\infty}\leq C  \| D_{c}(F'(g+h)-F'(g))\|_{\infty}\| h\|_{s}=o(\|h\|_s),
$$
which finish the proof of the first claim.
For the second one the relation 
$ f(g+h)-f g-f h=0 $ and Lemma~\ref{BanachAlgebra} conclude.
\end{proof}
\begin{Lemma}
\label{BanachAlgebra}
Let $f,g\in \mathcal{C}^s(\Omega)$ then there exists a constant $C$ depending on $\Omega$, $s$ and $d$ such that 
$$ \|f\, g\|_{s}\leq C\|f\|_s\|g\|_s. $$
\end{Lemma}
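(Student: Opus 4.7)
The plan is to expand each derivative of the product $fg$ using the general Leibniz rule and then bound the resulting terms by the $\mathcal{C}^s$ norms of $f$ and $g$ separately. Concretely, for any multi-index $\beta$ with $|\beta|\le s$, one has the identity
$$D^{\beta}(fg)=\sum_{\alpha\le\beta}\binom{\beta}{\alpha}\,D^{\alpha}f\,D^{\beta-\alpha}g,$$
where $\alpha\le\beta$ is understood coordinate-wise. This is the step on which everything hinges, and it is a purely algebraic identity that follows from iterating the ordinary product rule.

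Next I would estimate each summand in the supremum norm. Since both $|\alpha|$ and $|\beta-\alpha|$ are bounded by $|\beta|\le s$, the quantities $\|D^{\alpha}f\|_{\infty}$ and $\|D^{\beta-\alpha}g\|_{\infty}$ each appear as one of the terms in the definition of $\|f\|_s$ and $\|g\|_s$, respectively. Hence
$$\|D^{\alpha}f\,D^{\beta-\alpha}g\|_{\infty}\le\|D^{\alpha}f\|_{\infty}\|D^{\beta-\alpha}g\|_{\infty}\le\|f\|_s\|g\|_s.$$
Summing this bound over all multi-indices $\beta$ with $|\beta|\le s$, and over all $\alpha\le\beta$ while carrying along the binomial weights, one obtains
$$\|fg\|_s=\sum_{i=0}^{s}\sum_{|\beta|=i}\|D^{\beta}(fg)\|_{\infty}\le\Bigl(\sum_{i=0}^{s}\sum_{|\beta|=i}\sum_{\alpha\le\beta}\binom{\beta}{\alpha}\Bigr)\|f\|_s\|g\|_s,$$
and the bracketed prefactor is a finite combinatorial quantity depending only on $s$ and $d$, which can be taken as $C$. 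The compactness of $\Omega$ enters only implicitly, through the finiteness of the supremum norms involved.

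There is essentially no obstacle here: the statement is the classical assertion that $\mathcal{C}^s(\Omega)$ is a Banach algebra up to a multiplicative constant. The only care required is in correctly tracking the combinatorial factors arising from the Leibniz rule so as to produce an explicit (though non-sharp) value of $C$ depending only on $s$ and $d$.
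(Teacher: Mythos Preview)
Your proof is correct and follows essentially the same approach as the paper: both apply the multivariate Leibniz rule to $D^{\beta}(fg)$, bound each term $\|D^{\alpha}f\|_{\infty}\|D^{\beta-\alpha}g\|_{\infty}$ by $\|f\|_s\|g\|_s$, and absorb the combinatorial factors into the constant $C$.
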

\begin{proof}
The  proof is direct consequence of the multivariate Leibniz rule, i.e.
\begin{equation*}
   \| D_{a}(f\,g)\|_{\infty}\leq  \sum_{ \rvert \alpha \rvert \leq a }{a\choose \alpha}\|D_{\alpha}f\|_{\infty}\|D_{a-\alpha}g\|_{\infty}\leq C\|f \|_{s}\|g \|_{s}.
\end{equation*}
\end{proof}
\section{Refined convergence rates of regularized potentials}\label{auxiliary}

The aim of this section is to refine the results of \cite{del2022improved} so that any errors that arise in the linearizations vanish rapidly enough. 
The proofs lack interest; they are merely calculations and Taylor expansions of the exponential function. 

\begin{Lemma}\label{Lemmainequqlity}
Let $\Omega\subset{\R^d}$ be a compact set, ${\rm P}, {\rm Q}\in \mathcal{P}(\Omega)$, and its  empirical versions $\mP_n,\mQ_m$, with $\frac{n}{m+n}\to \lambda\in (0,1)$. Then
\begin{align}\label{Inequqlitt1}
     &\| e^{h_{n,m}}- e^{h_{*}}-e^{h_{*}}(h_{n,m}-h_{*})\|_{\mathcal{C}^s(\Omega\times\Omega)} =o_{\P}\left(\sqrt{\frac{n+m}{n\, m}}\right), \quad \text{and}\\
     &\| e^{h_{n,m}}- e^{h_{*}}\|_{\mathcal{C}^s(\Omega\times\Omega)} =\mathcal{O}_{\P}\left(\sqrt{\frac{n+m}{n\, m}}\right).\label{Inequqlitt2}
\end{align}
\end{Lemma}
\begin{proof}
Since the functions $h_{n,m} $ and $h_{n,m}$ are uniformly bounded, for all $n,m\in \N$, Lemma~\ref{Lemma:Hadamard}  applied to the exponential gives
$$\|e^{h_{n,m}}- e^{h_{*}}-e^{h_{*}}(h_{n,m}-h_{*})\|_{s}=o_{\P}(\| h_{*}-h_{n,m}\|_s)$$
and, using Theorem 4.5 in \cite{del2022improved}, we obtain  \eqref{Inequqlitt1}. To prove \eqref{Inequqlitt2} we apply the inverse triangle inequality to \eqref{Inequqlitt1};
\begin{multline*}
    \| e^{h_{n,m}}- e^{h_{*}}-e^{h_{*}}(h_{n,m}-h_{*})\|_{\mathcal{C}^s(\Omega\times\Omega)}\\\geq \| e^{h_{n,m}}- e^{h_{*}}\|_{\mathcal{C}^s(\Omega\times\Omega)}- \| e^{h_{*}}(h_{n,m}-h_{*})\|_{\mathcal{C}^s(\Omega\times\Omega)}.
\end{multline*}
Then we apply Lemma \ref{BanachAlgebra} and \eqref{Inequqlitt1} to obtain
$$
   \| e^{h_{n,m}}- e^{h_{*}}\|_{\mathcal{C}^s(\Omega\times\Omega)} \leq C \| (h_{n,m}-h_{*})\|_{\mathcal{C}^s(\Omega\times\Omega)}+o_{\P}\left(\sqrt{\frac{n+m}{n\, m}}\right).
$$
Theorem 4.5 in \cite{del2022improved} concludes.
\end{proof}
\begin{Lemma}\label{Lemma_well_sep}
Let $\Omega\subset{\R^d}$ be a compact set and $\mP,\mQ\in \mathcal{P}(\Omega)$, then
$$ \left\rvert \frac{1}{\epsilon} S_\epsilon({\rm P},{\rm Q})-\int h d\mP d\mQ-\frac{1}{2}\int (h-h_{*})^2 d\pi_{\mP,\mQ} \right\rvert \leq \frac{1}{6} \|h-h_{*} \|_{\infty}^3 e^{\|h-h_{*} \|_{\infty}},$$
for all $h(\x,\y)=f(\x)+g(\y)$, with $f,g\in \mathcal{C}(\Omega)$ and $\int  e^{{h}}\C\, d({\rm P}\otimes {\rm Q})=1$.
\end{Lemma}
\begin{proof}
The application of Taylor's theorem to the exponential gives 
\begin{equation}
\label{Taylor_exp}
 \rvert e^{x}-1-x-\frac{1}{2}x^2 \rvert \leq \frac{1}{6} \rvert x^3 \rvert e^{ \rvert x \rvert }.
\end{equation}
Since also $\int  e^{h_{*}} \C\, d({\rm P}\otimes {\rm Q})=1$, \eqref{optimallityCodt0} yields
$$\frac{1}{\epsilon}S_\epsilon(\mP,\mQ)=\int h_{*}d\mP\mQ\\-\int\left(\, e^{{h_{*}}} -e^{h}\right)\C\, d({\rm P}\otimes {\rm Q}). $$

We can decompose
$$ -\int\left(\, e^{{h_{*}}} -e^{h}\right)\C\, d({\rm P}\otimes {\rm Q})=\int\left(\, e^{h-{h_{*}}} -1\right) e^{h_*}\C\, d({\rm P}\otimes {\rm Q})$$
and, finally, \eqref{Taylor_exp} implies
\begin{align*}
 \left\rvert \frac{1}{\epsilon}S_1(\mP,\mQ)-\int h_{*}d(\mP\otimes \mQ)-\int\left(h-h_{*}+\frac{1}{2}(h-h_{*})^2\right)d \pi_{\mP,\mQ}  \right\rvert \leq \frac{1}{6} \|h-h_{*} \|_{\infty}^3 e^{\|h-h_{*} \|_{\infty}}.
\end{align*}
Since  \eqref{optimallityCodt0} cancels the linear terms, the proof is completed.
\end{proof}

\begin{Lemma}\label{Lamma_exponential}
Let $\Omega\subset{\R^d}$ be a compact set, ${\rm P}, {\rm Q}\in \mathcal{P}(\Omega)$ and the associate empirical measures ${\rm P}_n$ and ${\rm Q}_m$. Then there exists a constant $C(\Omega,d,s)$ such that 
$$\left\|\left( e^{f_{n,m}(\x)}- e^{f_{*}(\x)}\right)\C(\x, \cdot)\right\|_s\leq C(\Omega,d,s) \|f_{n,m}-f_*\|_\infty,$$
for all $\x\in \Omega$
and 
$$ \E\left[ \left\|\left( e^{f_{n,m}(\X)}- e^{f_{*}(\X)}\right)\C(\X, \cdot)\right\|_s^2\right]\leq  C(\Omega,d,s)\left(\frac{1}{n}+\frac{1}{m}\right).$$
\end{Lemma}
\begin{proof}
The first claim is straightforward;
$$  \left\|\left( e^{f_{n,m}(\X)}- e^{f_{*}(\X)}\right)\C(\X, \cdot)\right\|_s \leq  \|e^{f_{n,m}}- e^{f_{*}}\|_{\infty} \left\|\C(\X, \cdot)\right\|_s\leq C\|e^{f_{n,m}}- e^{f_{*}}\|_{\infty} . $$
The second claim is thus completed by applying Theorem 4.5 in \cite{del2022improved}.
\end{proof}
\begin{Lemma}\label{Lemma:uniform_conv}
Let $\Omega\subset{\R^d}$ be a compact set, ${\rm P},\in \mathcal{P}(\Omega)$ and ${\rm P}_n$ be the empirical measure of the i.i.d. sample $\X_1,\dots,\X_n$ distributed as $\rm P$. Then 
$$ n\, \E\left[ \left(\sup_{\y\in \Omega, f\in \mathcal{C}^s(\Omega), \| f\|_s\leq 1}\left \rvert \int f(\x) \C(\x, \y) d(\mP_n-\mP)(\x)\right \rvert \right)^2 \right]=O(1). $$
Moreover, the class
$\{\x\mapsto g(\y)\C(\x, \y)f(\x), \quad \y\in \Omega, \ \|f\|_s\leq 1 , \ \|g\|_s\leq 1\} $
is $\mP$-Donsker.
\end{Lemma}
\begin{proof}
 The inclusion 
\begin{equation}
    \label{relation}
\left\{\x\mapsto g(\y)\C(\x, \y) f(\x), \quad \y\in \Omega, \ \|f\|_s\leq 1 , \ \|g\|_s\leq 1\right\}  \subset \left\{f\in \mathcal{C}^s(\Omega), \quad \|f\|_s\leq C\right\}
\end{equation}
 holds for certain constant $C>0$ and  \cite[2.7.2 Corollary]{Vart_Well} and \cite[Exercise 2.3.1]{Gin2015MathematicalFO} give
\begin{equation*}
    \E\left[\left(\sup_{\|f\|_s\leq 1}({\rm P}_n-{\rm P})(f)\right)^2\right]\leq \frac{C}{n}.
\end{equation*}
Therefore the first statement holds. The last one is consequence of \eqref{relation},  Theorems~2.5.2 and 2.7.1  in \cite{Vart_Well}
\end{proof}


\section{Proofs of the Lemmas}\label{sec:proofLemmas}
\begin{proof}[Proof of Lemma~\ref{Lemma:ixi}]
We prove $ \partial_b {\bf i}_{h}(\nu)={\bf i}_{\partial_{b, \y } h} (\nu) \in \mathcal{C}(\Omega) $ inductively in $\beta=|b|\in \N$ the result.  For $\beta=0$ we only need to check the continuity, which holds by the same steps of \eqref{eq:contder} below. We avoid repeated arguments and skip the proof. Assume that $ \partial_{b'} {\bf i}_{h}(\nu)={\bf i}_{\partial_{b', \y } h} (\nu) $ for all $|b'|\leq \beta$. 
Let $ b=(b_1, \dots, b_d) $ be a multi-index with $|b|\leq\beta+1 $.   Assume without loss of generality that $b_1\geq 1$ and consider $b'=(b_1-1, b_2,\dots, b_d) $ which obviously satisfies  $|b'|\leq \beta $. Therefore, 
\begin{align*}
 \frac{{\bf i}_{\partial_{b', \y } h} (\nu)(\y+t{\bf e}_1)-  {\bf i}_{\partial_{b', \y } h} (\nu)(\y)}{t}  &= \frac{1}{t} \nu\left( \partial_{b', \y } h(\cdot, \y+ t{\bf e}_1 ) -\partial_{b', \y } h(\cdot, \y)  \right)\\
 &= \nu\left(  \frac{\partial_{b', \y } h(\cdot, \y+ t{\bf e}_1 ) -\partial_{b', \y } h(\cdot, \y)}{t}   \right).
\end{align*}
Due to the limit 
$$ \lim_{t\to 0}\left\| \frac{\partial_{b', \y } h(\cdot, \y+ t{\bf e}_1 ) -\partial_{b', \y } h(\cdot, \y)}{t} - \partial_{b, \y } h(\cdot, \y)\right\|_{\mathcal{C}^{\alpha}(\Omega)}=0$$
we get 
$$ \lim_{t\to 0}\frac{{\bf i}_{\partial_{b', \y } h}^{\alpha} (\nu)(\y+t{\bf e}_1)-  {\bf i}_{\partial_{b', \y } h}^{\alpha} (\nu)(\y)}{t}= {\bf i}_{\partial_{b, \y } h}^{\alpha} (\nu)(\y), \quad \forall\, \y\in \Omega . $$
Since 
\begin{align}
\begin{split}
    \label{eq:contder}
     \vert  {\bf i}_{\partial_{b, \y } h}^{\alpha} (\nu)(\y')- {\bf i}_{\partial_{b, \y } h}^{\alpha} (\nu)(\y)\vert &=\nu\left(  {\partial_{b, \y } h(\cdot, \y' ) -\partial_{b, \y } h(\cdot, \y)}   \right) \\
     &\leq \|\nu\|_{(\mathcal{C}^{\alpha}(\Omega))'} \| \partial_{b, \y } h(\cdot, \y' ) -\partial_{b, \y } h(\cdot, \y)\|_{\mathcal{C}^{\alpha}(\Omega)}
\end{split}
\end{align}
and $h\in \mathcal{C}^{\infty}(\Omega^2)$, we get $ {\bf i}_{\partial_{b, \y } h}^{\alpha} (\nu)\in \mathcal{C}(\Omega
)$. As a consequence, $ \partial_b {\bf i}_{h}(\nu)={\bf i}_{\partial_{b, \y } h}^{\alpha} (\nu) \in \mathcal{C}(\Omega) $ for all $b\in \N^d$ and, of course, ${\bf i}_{h}(\nu)\in \mathcal{C}^{\infty}(\Omega)$. 

\end{proof}

\begin{proof}[Proof of Lemma~\ref{Lemma:TechnicalDiv}]
    First note that, since 
$\int h_{\mP'_m,\mP'_m}\C \, d(\mP_n \otimes \mP'_m)=1 $,
Lemma~\ref{Lemma_well_sep} yields
$$
  S_\epsilon({\rm P}_n,{\rm P}'_m)= \int h_{\mP'_m,\mP'_m} d(\mP_n \otimes \mP'_m)+\frac{1}{2 \epsilon}\int (h_{\mP_n,\mP'_m}-h_{\mP'_m,\mP'_m})^2 d\pi_{\mP_n,\mP'_m} +o_\P\left(\frac{n+m}{n\,m}\right)
   ,
$$
where the last term is consequence of Lemma~\ref{BanachAlgebra} and Theorem~4.5 in \cite{del2022improved}. 
Therefore 
\begin{multline*}
    S_\epsilon({\rm P}_n,{\rm P}'_m)= S_\epsilon({\rm P}'_{ m},{\rm P}'_m)+\int h_{\mP'_m,\mP'_m} d((\mP_{n}-\mP_m') \otimes \mP'_m)\\
    +\frac{1}{2\epsilon}\int (h_{\mP_n,\mP'_m}-h_{\mP'_m,\mP'_m})^2 d\pi_{\mP_n,\mP'_m} +o_\P\left(\frac{n+m}{n\,m}\right).
\end{multline*}
By the same argument we also have 
\begin{multline*}
    S_\epsilon({\rm P}_n,{\rm P}'_m)= S_\epsilon({\rm P}_n,{\rm P}_n)+\int h_{\mP_n,\mP_n}  d(\mP_n \otimes (\mP'_m-\mP_n) )\\
    +\frac{1}{2\epsilon}\int (h_{\mP_n,\mP'_m}-h_{\mP_n,\mP_n})^2 d\pi_{\mP_n,\mP'_m} +o_\P\left(\frac{n+m}{n\,m}\right).
\end{multline*}
and, as a consequence, 
\begin{multline*}
  D_\epsilon(\mP_n,\mP'_m)=\frac{1}{2}\left(\int h_{\mP'_m,\mP'_m} d((\mP_n-\mP_m') \otimes \mP'_m)+\int h_{\mP_n,\mP_n} d(\mP_n \otimes (\mP'_m-\mP_n) )\right) \\
    +\frac{1}{4 \epsilon}\int ((h_{\mP_n,\mP'_m}-h_{\mP'_m,\mP'_m})^2 + (h_{\mP_n,\mP'_m}-h_{\mP_n,\mP_n})^2) d\pi_{\mP_n,\mP'_m} + o_\P\left(\frac{n+m}{n\,m}\right).
\end{multline*}
The relations
$$ \int h_{\mP'_m,\mP'_m} d((\mP_n-\mP_m') \otimes \mP'_m)= \int f_{\mP_n,\mP_n} (d\mP_n-d\mP'_m), $$
$$ \int h_{\mP_n,\mP_n}  d(\mP_n \otimes (\mP'_m-\mP_n) )= \int g_{\mP'_m,\mP'_m} (d\mP_m'-d\mP_n) $$
and the fact that, in this symmetric case, $ f_{\mP_n,\mP_n}= g_{\mP_n,\mP_n}+c$, give
\begin{multline*}
  D_\epsilon(\mP_n,\mP'_m)=\frac{1}{2}\int (g_{\mP_n,\mP_n}- g_{\mP'_m,\mP'_m})(d\mP'_m-d\mP'_n) \\
    +\frac{1}{4 \epsilon}\int ((h_{\mP_n,\mP'_m}-h_{\mP'_m,\mP'_m})^2 + (h_{\mP_n,\mP'_m}-h_{\mP_n,\mP_n})^2) d\pi_{\mP_n,\mP'_m} + o_\P\left(\frac{n+m}{n\,m}\right).
\end{multline*}
Since $\|\pi_{\mP_n,\mP'_m}-\pi_{\mP,\mP}\|_{(\mathcal{C}^{ s }(\Omega^2))'}\rightarrow 0$ a.s. and $\|\big( h_{\mP_n,\mP'_m}-h_{\mP'_m,\mP'_m}\big)^2\|_{s}=\mathcal{O}_{\P}\left(\frac{n+m}{n\,m}\right), $
the inequality 
\begin{multline*}
    \left\rvert\int (h_{\mP_n,\mP'_m}-h_{\mP'_m,\mP'_m})^2 (d\pi_{\mP_n,\mP'_m}-d\pi_{\mP,\mP}) \right\rvert\\
	\leq\| (h_{\mP_n,\mP'_m}-h_{\mP'_m,\mP'_m})^2\|_{ s }\| \pi_{\mP_n,\mP'_m}-\pi_{\mP,\mP}\|_{(\mathcal{C}^{ s }(\Omega^2))'}
\end{multline*}
 yields the result. 
\end{proof}
\bibliographystyle{imsart-number}
\bibliography{references2}

\begin{thebibliography}{38}

\bibitem{BigotCLT}
\begin{barticle}[author]
\bauthor{\bsnm{Bigot},~\bfnm{J{\'e}r{\'e}mie}\binits{J.}},
  \bauthor{\bsnm{Cazelles},~\bfnm{Elsa}\binits{E.}} \AND
  \bauthor{\bsnm{Papadakis},~\bfnm{Nicolas}\binits{N.}}
(\byear{2019}).
\btitle{{Central limit theorems for entropy-regularized optimal transport on
  finite spaces and statistical applications}}.
\bjournal{Electronic Journal of Statistics}
\bvolume{13}
\bpages{5120 -- 5150}.
\bdoi{10.1214/19-EJS1637}
\end{barticle}
\endbibitem

\bibitem{Brezis}
\begin{bbook}[author]
\bauthor{\bsnm{Brezis},~\bfnm{Haim}\binits{H.}}
(\byear{2010}).
\btitle{Functional Analysis, Sobolev Spaces and Partial Differential
  Equations}.
\bpublisher{New York: Springer.}
\end{bbook}
\endbibitem

\bibitem{CarlierFredho}
\begin{barticle}[author]
\bauthor{\bsnm{Carlier},~\bfnm{Guillaume}\binits{G.}} \AND
  \bauthor{\bsnm{Laborde},~\bfnm{Maxime}\binits{M.}}
(\byear{2020}).
\btitle{A Differential Approach to the Multi-Marginal Schr{\"o}dinger System}.
\bjournal{SIAM Journal on Mathematical Analysis}
\bvolume{52}
\bpages{709-717}.
\bdoi{10.1137/19M1253800}
\end{barticle}
\endbibitem

\bibitem{Courty2017OptimalTF}
\begin{barticle}[author]
\bauthor{\bsnm{Courty},~\bfnm{Nicolas}\binits{N.}},
  \bauthor{\bsnm{Flamary},~\bfnm{R{\'e}mi}\binits{R.}},
  \bauthor{\bsnm{Tuia},~\bfnm{Devis}\binits{D.}} \AND
  \bauthor{\bsnm{Rakotomamonjy},~\bfnm{Alain}\binits{A.}}
(\byear{2017}).
\btitle{Optimal Transport for Domain Adaptation}.
\bjournal{IEEE Transactions on Pattern Analysis and Machine Intelligence}
\bvolume{39}
\bpages{1853-1865}.
\end{barticle}
\endbibitem

\bibitem{Cut13}
\begin{binproceedings}[author]
\bauthor{\bsnm{Cuturi},~\bfnm{Marco}\binits{M.}}
(\byear{2013}).
\btitle{Sinkhorn Distances: Lightspeed Computation of Optimal Transport}.
In \bbooktitle{Advances in Neural Information Processing Systems 26: 27th
  Annual Conference on Neural Information Processing Systems 2013. Proceedings
  of a meeting held December 5-8, 2013, Lake Tahoe, Nevada, United States.}
\bpages{2292--2300}.
\end{binproceedings}
\endbibitem

\bibitem{delara2021transportbased}
\begin{barticle}[author]
\bauthor{\bparticle{de} \bsnm{Lara},~\bfnm{Lucas}\binits{L.}},
  \bauthor{\bsnm{Gonz{\'a}lez-Sanz},~\bfnm{Alberto}\binits{A.}},
  \bauthor{\bsnm{Asher},~\bfnm{Nicholas}\binits{N.}} \AND
  \bauthor{\bsnm{Loubes},~\bfnm{Jean-Michel}\binits{J.-M.}}
(\byear{2021}).
\btitle{Transport-based Counterfactual Models}.
\bjournal{Pre-Print}.
\end{barticle}
\endbibitem

\bibitem{de2022diffeomorphic}
\begin{barticle}[author]
\bauthor{\bsnm{De~Lara},~\bfnm{Lucas}\binits{L.}},
  \bauthor{\bsnm{Gonz{\'a}lez-Sanz},~\bfnm{Alberto}\binits{A.}} \AND
  \bauthor{\bsnm{Loubes},~\bfnm{Jean-Michel}\binits{J.-M.}}
(\byear{2022}).
\btitle{Diffeomorphic Registration using Sinkhorn Divergences}.
\bjournal{arXiv preprint arXiv:2206.13948}.
\end{barticle}
\endbibitem

\bibitem{BarGinMat99}
\begin{barticle}[author]
\bauthor{\bparticle{del} \bsnm{Barrio},~\bfnm{Eustasio}\binits{E.}},
  \bauthor{\bsnm{Gin\'{e}},~\bfnm{Evarist}\binits{E.}} \AND
  \bauthor{\bsnm{Matr\'{a}n},~\bfnm{Carlos}\binits{C.}}
(\byear{1999}).
\btitle{Central limit theorems for the {W}asserstein distance between the
  empirical and the true distributions}.
\bjournal{Ann. Probab.}
\bvolume{27}
\bpages{1009--1071}.
\bdoi{10.1214/aop/1022677394}
\bmrnumber{1698999}
\end{barticle}
\endbibitem

\bibitem{BarGinUtz05}
\begin{barticle}[author]
\bauthor{\bparticle{del} \bsnm{Barrio},~\bfnm{Eustasio}\binits{E.}},
  \bauthor{\bsnm{Gin\'{e}},~\bfnm{Evarist}\binits{E.}} \AND
  \bauthor{\bsnm{Utzet},~\bfnm{Frederic}\binits{F.}}
(\byear{2005}).
\btitle{Asymptotics for {$L_2$} functionals of the empirical quantile process,
  with applications to tests of fit based on weighted {W}asserstein distances}.
\bjournal{Bernoulli}
\bvolume{11}
\bpages{131--189}.
\bdoi{10.3150/bj/1110228245}
\bmrnumber{2121458}
\end{barticle}
\endbibitem

\bibitem{delbarrio2021centraldisc}
\begin{barticle}[author]
\bauthor{\bparticle{del} \bsnm{Barrio},~\bfnm{Eustasio}\binits{E.}},
  \bauthor{\bsnm{Gonz{\'a}lez-Sanz},~\bfnm{Alberto}\binits{A.}} \AND
  \bauthor{\bsnm{Loubes},~\bfnm{Jean-Michel}\binits{J.-M.}}
(\byear{2021}).
\btitle{A Central Limit Theorem for Semidiscrete Wasserstein Distances}.
\bjournal{Pre-Print}.
\end{barticle}
\endbibitem

\bibitem{delbarrio2021central}
\begin{barticle}[author]
\bauthor{\bparticle{del} \bsnm{Barrio},~\bfnm{Eustasio}\binits{E.}},
  \bauthor{\bsnm{Gonz{\'a}lez-Sanz},~\bfnm{Alberto}\binits{A.}} \AND
  \bauthor{\bsnm{Loubes},~\bfnm{Jean-Michel}\binits{J.-M.}}
(\byear{2021}).
\btitle{Central Limit Theorems for General Transportation Costs}.
\bjournal{Pre-Print}.
\end{barticle}
\endbibitem

\bibitem{del2022improved}
\begin{barticle}[author]
\bauthor{\bparticle{del} \bsnm{Barrio},~\bfnm{Eustasio}\binits{E.}},
  \bauthor{\bsnm{Gonzalez-Sanz},~\bfnm{Alberto}\binits{A.}},
  \bauthor{\bsnm{Loubes},~\bfnm{Jean-Michel}\binits{J.-M.}} \AND
  \bauthor{\bsnm{Niles-Weed},~\bfnm{Jonathan}\binits{J.}}
(\byear{2022}).
\btitle{An improved central limit theorem and fast convergence rates for
  entropic transportation costs}.
\bjournal{arXiv preprint arXiv:2204.09105}.
\end{barticle}
\endbibitem

\bibitem{del2019central}
\begin{barticle}[author]
\bauthor{\bsnm{Del~Barrio},~\bfnm{Eustasio}\binits{E.}},
  \bauthor{\bsnm{Gordaliza},~\bfnm{Paula}\binits{P.}} \AND
  \bauthor{\bsnm{Loubes},~\bfnm{Jean-Michel}\binits{J.-M.}}
(\byear{2019}).
\btitle{A central limit theorem for Lp transportation cost on the real line
  with application to fairness assessment in machine learning}.
\bjournal{Information and Inference: A Journal of the IMA}
\bvolume{8}
\bpages{817--849}.
\end{barticle}
\endbibitem

\bibitem{delbarrio2019}
\begin{barticle}[author]
\bauthor{\bparticle{del} \bsnm{Barrio},~\bfnm{Eustasio}\binits{E.}} \AND
  \bauthor{\bsnm{Loubes},~\bfnm{Jean-Michel}\binits{J.-M.}}
(\byear{2019}).
\btitle{{Central limit theorems for empirical transportation cost in general
  dimension}}.
\bjournal{The Annals of Probability}
\bvolume{47}
\bpages{926 -- 951}.
\bdoi{10.1214/18-AOP1275}
\end{barticle}
\endbibitem

\bibitem{DiffeosOT}
\begin{binproceedings}[author]
\bauthor{\bsnm{Feydy},~\bfnm{Jean}\binits{J.}},
  \bauthor{\bsnm{Charlier},~\bfnm{Benjamin}\binits{B.}},
  \bauthor{\bsnm{Vialard},~\bfnm{François-Xavier}\binits{F.-X.}} \AND
  \bauthor{\bsnm{Peyr{\'e}},~\bfnm{Gabriel}\binits{G.}}
(\byear{2017}).
\btitle{Optimal Transport for Diffeomorphic Registration}.
In \bbooktitle{MICCAI}.
\end{binproceedings}
\endbibitem

\bibitem{Feydy2019InterpolatingBO}
\begin{binproceedings}[author]
\bauthor{\bsnm{Feydy},~\bfnm{Jean}\binits{J.}},
  \bauthor{\bsnm{S{\'e}journ{\'e}},~\bfnm{Thibault}\binits{T.}},
  \bauthor{\bsnm{Vialard},~\bfnm{Fran{\c c}ois-Xavier}\binits{F.-X.}},
  \bauthor{\bsnm{Amari},~\bfnm{Shun‐ichi}\binits{S.}},
  \bauthor{\bsnm{Trouv{\'e}},~\bfnm{Alain}\binits{A.}} \AND
  \bauthor{\bsnm{Peyr{\'e}},~\bfnm{Gabriel}\binits{G.}}
(\byear{2019}).
\btitle{Interpolating between Optimal Transport and MMD using Sinkhorn
  Divergences}.
In \bbooktitle{AISTATS}.
\end{binproceedings}
\endbibitem

\bibitem{MusicFlam}
\begin{binproceedings}[author]
\bauthor{\bsnm{Flamary},~\bfnm{R\'{e}mi}\binits{R.}},
  \bauthor{\bsnm{F\'{e}votte},~\bfnm{C\'{e}dric}\binits{C.}},
  \bauthor{\bsnm{Courty},~\bfnm{Nicolas}\binits{N.}} \AND
  \bauthor{\bsnm{Emiya},~\bfnm{Valentin}\binits{V.}}
(\byear{2016}).
\btitle{Optimal spectral transportation with application to music
  transcription}.
In \bbooktitle{Advances in Neural Information Processing Systems}
(\beditor{\bfnm{D.}\binits{D.}~\bsnm{Lee}},
  \beditor{\bfnm{M.}\binits{M.}~\bsnm{Sugiyama}},
  \beditor{\bfnm{U.}\binits{U.}~\bsnm{Luxburg}},
  \beditor{\bfnm{I.}\binits{I.}~\bsnm{Guyon}} \AND
  \beditor{\bfnm{R.}\binits{R.}~\bsnm{Garnett}}, eds.)
\bvolume{29}.
\bpublisher{Curran Associates, Inc.}
\end{binproceedings}
\endbibitem

\bibitem{Fournier2013OnTR}
\begin{barticle}[author]
\bauthor{\bsnm{Fournier},~\bfnm{N.}\binits{N.}} \AND
  \bauthor{\bsnm{Guillin},~\bfnm{A.}\binits{A.}}
(\byear{2013}).
\btitle{On the rate of convergence in Wasserstein distance of the empirical
  measure}.
\bjournal{Probability Theory and Related Fields}
\bvolume{162}
\bpages{707-738}.
\end{barticle}
\endbibitem

\bibitem{genevay2019}
\begin{binproceedings}[author]
\bauthor{\bsnm{Genevay},~\bfnm{Aude}\binits{A.}},
  \bauthor{\bsnm{Chizat},~\bfnm{L\'{e}na\"{i}c}\binits{L.}},
  \bauthor{\bsnm{Bach},~\bfnm{Francis}\binits{F.}},
  \bauthor{\bsnm{Cuturi},~\bfnm{Marco}\binits{M.}} \AND
  \bauthor{\bsnm{Peyr\'{e}},~\bfnm{Gabriel}\binits{G.}}
(\byear{2019}).
\btitle{Sample Complexity of Sinkhorn Divergences}.
In \bbooktitle{Proceedings of Machine Learning Research}
(\beditor{\bfnm{Kamalika}\binits{K.}~\bsnm{Chaudhuri}} \AND
  \beditor{\bfnm{Masashi}\binits{M.}~\bsnm{Sugiyama}}, eds.).
\bseries{Proceedings of Machine Learning Research}
\bvolume{89}
\bpages{1574--1583}.
\bpublisher{PMLR}.
\end{binproceedings}
\endbibitem

\bibitem{GenPeyCut18}
\begin{binproceedings}[author]
\bauthor{\bsnm{Genevay},~\bfnm{Aude}\binits{A.}},
  \bauthor{\bsnm{Peyr{\'{e}}},~\bfnm{Gabriel}\binits{G.}} \AND
  \bauthor{\bsnm{Cuturi},~\bfnm{Marco}\binits{M.}}
(\byear{2018}).
\btitle{Learning Generative Models with Sinkhorn Divergences}.
In \bbooktitle{International Conference on Artificial Intelligence and
  Statistics, {AISTATS} 2018, 9-11 April 2018, Playa Blanca, Lanzarote, Canary
  Islands, Spain}
\bpages{1608--1617}.
\end{binproceedings}
\endbibitem

\bibitem{Gin2015MathematicalFO}
\begin{binproceedings}[author]
\bauthor{\bsnm{Gin{\'e}},~\bfnm{Eva}\binits{E.}} \AND
  \bauthor{\bsnm{Nickl},~\bfnm{Richard}\binits{R.}}
(\byear{2015}).
\btitle{Mathematical Foundations of Infinite-Dimensional Statistical Models}.
In \bbooktitle{Cambridge Series in Statistical and Probabilistic Mathematics,
  [40]. Cambridge University Press, New York.}
\end{binproceedings}
\endbibitem

\bibitem{Goldfeld2022LimitTF}
\begin{barticle}[author]
\bauthor{\bsnm{Goldfeld},~\bfnm{Ziv}\binits{Z.}},
  \bauthor{\bsnm{Kato},~\bfnm{Kengo}\binits{K.}},
  \bauthor{\bsnm{Rioux},~\bfnm{Gabriel}\binits{G.}} \AND
  \bauthor{\bsnm{Sadhu},~\bfnm{Ritwik}\binits{R.}}
(\byear{2022}).
\btitle{Limit Theorems for Entropic Optimal Transport Maps and the Sinkhorn
  Divergence}.
\bjournal{arXiv preprint}.
\end{barticle}
\endbibitem

\bibitem{Goldfeld2022StatisticalIW}
\begin{barticle}[author]
\bauthor{\bsnm{Goldfeld},~\bfnm{Ziv}\binits{Z.}},
  \bauthor{\bsnm{Kato},~\bfnm{Kengo}\binits{K.}},
  \bauthor{\bsnm{Rioux},~\bfnm{Gabriel}\binits{G.}} \AND
  \bauthor{\bsnm{Sadhu},~\bfnm{Ritwik}\binits{R.}}
(\byear{2022}).
\btitle{Statistical inference with regularized optimal transport}.
\bjournal{arXiv: Probability}.
\end{barticle}
\endbibitem

\bibitem{gonzalezdelgado2021twosample}
\begin{barticle}[author]
\bauthor{\bsnm{Gonz{\'a}lez-Delgado},~\bfnm{Javier}\binits{J.}},
  \bauthor{\bsnm{Gonz{\'a}lez-Sanz},~\bfnm{Alberto}\binits{A.}},
  \bauthor{\bsnm{Cort{\'e}s},~\bfnm{Juan}\binits{J.}} \AND
  \bauthor{\bsnm{Neuvial},~\bfnm{Pierre}\binits{P.}}
(\byear{2021}).
\btitle{Two-sample goodness-of-fit tests on the flat torus based on Wasserstein
  distance and their relevance to structural biology}.
\end{barticle}
\endbibitem

\bibitem{gonzalezsanzShayan2023weak}
\begin{bmisc}[author]
\bauthor{\bsnm{González-Sanz},~\bfnm{Alberto}\binits{A.}} \AND
  \bauthor{\bsnm{Hundrieser},~\bfnm{Shayan}\binits{S.}}
(\byear{2023}).
\btitle{Weak Limits for Empirical Entropic Optimal Transport: Beyond Smooth
  Costs}.
\end{bmisc}
\endbibitem

\bibitem{Grenander1968}
\begin{bbook}[author]
\bauthor{\bsnm{Grenander},~\bfnm{Ulf}\binits{U.}}
(\byear{1968}).
\btitle{Probabilities on algebraic structures},
\bedition{Second} ed.
\bpublisher{Almqvist \& Wiksell, Stockholm; John Wiley \& Sons, Inc., New
  York-London}.
\bmrnumber{259969}
\end{bbook}
\endbibitem

\bibitem{Gunsilius2021MatchingFC}
\begin{barticle}[author]
\bauthor{\bsnm{Gunsilius},~\bfnm{Florian~F.}\binits{F.~F.}} \AND
  \bauthor{\bsnm{Xu},~\bfnm{Yuliang}\binits{Y.}}
(\byear{2021}).
\btitle{Matching for causal effects via multimarginal optimal transport}.
\bjournal{arXiv: Probability}.
\end{barticle}
\endbibitem

\bibitem{ChaosDecom}
\begin{barticle}[author]
\bauthor{\bsnm{Harchaoui},~\bfnm{Za{\"i}d}\binits{Z.}},
  \bauthor{\bsnm{Liu},~\bfnm{Lang}\binits{L.}} \AND
  \bauthor{\bsnm{Pal},~\bfnm{Soumik}\binits{S.}}
(\byear{2020}).
\btitle{Asymptotics of Entropy-Regularized Optimal Transport via Chaos
  Decomposition}.
\bjournal{arXiv: Probability}.
\end{barticle}
\endbibitem

\bibitem{HunKlaSta22}
\begin{barticle}[author]
\bauthor{\bsnm{Hundrieser},~\bfnm{Shayan}\binits{S.}},
  \bauthor{\bsnm{Klatt},~\bfnm{Marcel}\binits{M.}},
  \bauthor{\bsnm{Staudt},~\bfnm{Thomas}\binits{T.}} \AND
  \bauthor{\bsnm{Munk},~\bfnm{Axel}\binits{A.}}
(\byear{2022}).
\btitle{A Unifying Approach to Distributional Limits for Empirical Optimal
  Transport}.
\bjournal{Pre-Print}.
\end{barticle}
\endbibitem

\bibitem{KlaTamMun20}
\begin{barticle}[author]
\bauthor{\bsnm{Klatt},~\bfnm{Marcel}\binits{M.}},
  \bauthor{\bsnm{Tameling},~\bfnm{Carla}\binits{C.}} \AND
  \bauthor{\bsnm{Munk},~\bfnm{Axel}\binits{A.}}
(\byear{2020}).
\btitle{Empirical regularized optimal transport: statistical theory and
  applications}.
\bjournal{SIAM J. Math. Data Sci.}
\bvolume{2}
\bpages{419--443}.
\bdoi{10.1137/19M1278788}
\bmrnumber{4105566}
\end{barticle}
\endbibitem

\bibitem{Weed19}
\begin{binproceedings}[author]
\bauthor{\bsnm{Mena},~\bfnm{Gonzalo}\binits{G.}} \AND
  \bauthor{\bsnm{Niles-Weed},~\bfnm{Jonathan}\binits{J.}}
(\byear{2019}).
\btitle{Statistical bounds for entropic optimal transport: sample complexity
  and the central limit theorem}.
In \bbooktitle{Advances in Neural Information Processing Systems}
\bvolume{32}.
\bpublisher{Curran Associates, Inc.}
\end{binproceedings}
\endbibitem

\bibitem{Pooladian2021EntropicEO}
\begin{barticle}[author]
\bauthor{\bsnm{Pooladian},~\bfnm{Aram-Alexandre}\binits{A.-A.}} \AND
  \bauthor{\bsnm{Niles-Weed},~\bfnm{Jonathan}\binits{J.}}
(\byear{2021}).
\btitle{Entropic estimation of optimal transport maps}.
\end{barticle}
\endbibitem

\bibitem{RabinSliced}
\begin{binproceedings}[author]
\bauthor{\bsnm{Rabin},~\bfnm{Julien}\binits{J.}},
  \bauthor{\bsnm{Peyr{\'e}},~\bfnm{Gabriel}\binits{G.}},
  \bauthor{\bsnm{Delon},~\bfnm{Julie}\binits{J.}} \AND
  \bauthor{\bsnm{Bernot},~\bfnm{Marc}\binits{M.}}
(\byear{2011}).
\btitle{Wasserstein Barycenter and Its Application to Texture Mixing}.
In \bbooktitle{SSVM}.
\end{binproceedings}
\endbibitem

\bibitem{Rigollet2022OnTS}
\begin{barticle}[author]
\bauthor{\bsnm{Rigollet},~\bfnm{Philippe}\binits{P.}} \AND
  \bauthor{\bsnm{Stromme},~\bfnm{Austin}\binits{A.}}
(\byear{2022}).
\btitle{On the sample complexity of entropic optimal transport}.
\bjournal{arXiv preprint}.
\end{barticle}
\endbibitem

\bibitem{vaart_1998}
\begin{bbook}[author]
\bauthor{\bsnm{Vaart},~\bfnm{A.~W. van~der}\binits{A.~W. v.~d.}}
(\byear{1998}).
\btitle{Asymptotic Statistics}.
\bseries{Cambridge Series in Statistical and Probabilistic Mathematics}.
\bpublisher{Cambridge University Press}.
\bdoi{10.1017/CBO9780511802256}
\end{bbook}
\endbibitem

\bibitem{Vart_Well}
\begin{bbook}[author]
\bauthor{\bsnm{Vaart},~\bfnm{Aad W. Van~Der}\binits{A.~W. V.~D.}} \AND
  \bauthor{\bsnm{Wellner},~\bfnm{Jon~A.}\binits{J.~A.}}
(\byear{1996}).
\btitle{Weak convergence and empirical processes}.
\bpublisher{Springer, New York, NY}.
\end{bbook}
\endbibitem

\bibitem{villani2003topics}
\begin{bbook}[author]
\bauthor{\bsnm{Villani},~\bfnm{C{\'e}dric}\binits{C.}}
(\byear{2003}).
\btitle{Topics in optimal transportation}.
\bseries{Graduate studies in mathematics}
\bvolume{58}.
\bpublisher{American Mathematical Soc.}
\end{bbook}
\endbibitem

\bibitem{weedBach}
\begin{barticle}[author]
\bauthor{\bsnm{Weed},~\bfnm{Jonathan}\binits{J.}} \AND
  \bauthor{\bsnm{Bach},~\bfnm{Francis}\binits{F.}}
(\byear{2019}).
\btitle{{Sharp asymptotic and finite-sample rates of convergence of empirical
  measures in Wasserstein distance}}.
\bjournal{Bernoulli}
\bvolume{25}
\bpages{2620 -- 2648}.
\bdoi{10.3150/18-BEJ1065}
\end{barticle}
\endbibitem

\end{thebibliography}
\end{document}